\documentclass[1p, preprint]{elsarticle}
\usepackage{amsmath,amssymb,mathdots,mathrsfs}
\usepackage{amsthm}
\usepackage{algorithmic}
\usepackage{comment}
\usepackage{color}
\usepackage{times}
\usepackage{url}

\usepackage{subfigure}

\def\calI{\mathcal{I}}
\def\calJ{\mathcal{J}}

\def\bfm{\mathbf{m}}
\def\bfn{\mathbf{n}}

 % ???
 % ???

\def\bbC{\mathbb{C}}

\def\bbN{\mathbb{N}}

\def\bbR{\mathbb{R}}

\def\bbT{\mathbb{T}}

\def\rmF{\mathrm{F}}

\newcommand{\defeq}{\mathrel{:=}}
\newcommand{\mvec}{\mathop{\text{vec}}}
\newcommand{\rrank}{\mathop{\text{rank}}}
\newcommand{\rowspan}{\mathop{\text{rowspan}}}
\newcommand{\minim}{\mathop{\text{\;minimize\;}}}
\newcommand{\sto}{\mathop{\text{\;subject\ to\;}}}
\newcommand{\diag}{\mathop{\mathrm{diag}}} % ???
\newcommand{\trace}{\mathop{\text{tr}}} % ???
\newcommand{\vcol}{\mathop{\mathrm{col}}} % ???
\def\blkdiag{\mathop{\mathrm{blkdiag}}} % ???

\newcommand{\bmx}{\begin{bmatrix}}
\newcommand{\emx}{\end{bmatrix}}
\newcommand{\bsm}{\left[\begin{smallmatrix}}
\newcommand{\esm}{\end{smallmatrix}\right]}

\newcommand{\ie}{\emph{i.e.}}
\newcommand{\eg}{\emph{e.g.}}

 % ???

%\big(#1\big)}
%\big(#1\big)}
\def\rowspan{\mathop{\mathrm{rowspan}}}

\def\rowvec#1#2{[\,#1 \; \cdots \; #2\,]}
\newcommand{\resds}{s} % letter for a residual
\def\resd#1{\resds(#1)} % ???
\newcommand{\invgresd}{y} %_\resds}
\newcommand{\invgresdmat}{Y} %_\resds}
\def\invgresdbl#1{\invgresdmat_{{:,#1}}} %_\resds}
\newcommand{\HH}{\mathscr{H}}   % Hankel matrix
\def\struct{\mathscr{S}} % ???
\newcommand{\bw}{\mu}   % Bandwidth
\def\ortvec#1{{e^{(#1)}}}   % Bandwidth
\newcommand{\lschol}{g_\resds}
\newcommand{\pseudopartial}{\partial^{(\mathfrak{p})}}
\newcommand{\pseudoz}{z^{(\mathfrak{p})}}
\def\mgrad#1#2{\mathop{\nabla_{{#1}\times {#2}}}} % ???

\def\rchol#1{\mathrm{L}_{#1}}

\def\costfun{f}
\def\structdelt{\struct_{\Delta}}%W,\sdat{p}}} % ???
\def\np{n_\mathsf{p}}
\def\maffine#1{\mathbf{S}_{#1}} % ???

\def\wmat{\mathrm{W}}
\def\cmat{\mathrm{V}}

\def\intrng#1#2{{{#1}:{#2}}}

\def\sdat#1{{#1}_{ \mathtt{D}}}
\def\sest#1{\widehat{#1}}
\def\sdelt#1{{\Delta #1}}
\def\vfix#1{{\text{fix}}(w)}
\def\shiftmat#1{\mathrm{J}_{#1}}
\def\eyemat#1{\mathrm{I}_{#1}}

\def\mblk#1#2#3{#1_{\#{#2}{#3}}}

\newtheorem{algorithm}{Algorithm} 
\newtheorem{theorem}{Theorem} 
\newtheorem{problem}{Problem} 
\newtheorem{corollary}{Corollary} 
\newtheorem{lemma}{Lemma} 
\newtheorem{proposition}{Proposition} 
\newtheorem{note}{Note} 
 
\newtheorem{example}{Example}

\def\proofapp{\begin{proof}The proof is given in \ref{sec:proofs}.\end{proof}}

\begin{document}

\begin{frontmatter}

%% Title, authors and addresses

%% use the tnoteref command within \title for footnotes;
%% use the tnotetext command for the associated footnote;
%% use the fnref command within \author or \address for footnotes;
%% use the fntext command for the associated footnote;
%% use the corref command within \author for corresponding author footnotes;
%% use the cortext command for the associated footnote;
%% use the ead command for the email address,
%% and the form \ead[url] for the home page:
%%
%% \title{Title\tnoteref{label1}}
%% \tnotetext[label1]{}
%% \author{Name\corref{cor1}\fnref{label2}}
%% \ead{email address}
%% \ead[url]{home page}
%% \fntext[label2]{}
%% \cortext[cor1]{}
%% \address{Address\fnref{label3}}
%% \fntext[label3]{}

\title{Variable projection for affinely structured \\ low-rank approximation in weighted $2$-norms}
\author[vub]{Konstantin Usevich\corref{cor1}}
\ead{Konstantin.Usevich{@}vub.ac.be}
\author[vub]{Ivan Markovsky}
\ead{Ivan.Markovsky{@}vub.ac.be}

\cortext[cor1]{Corresponding author}

\address[vub]{Vrije Universiteit Brussel, Department ELEC, Pleinlaan 2, B-1050, Brussels, Belgium}

\begin{abstract}
The structured low-rank approximation problem for general affine structures, weighted $2$-norms and fixed elements is considered. The variable projection principle is used to reduce the dimensionality of the optimization problem. Algorithms for evaluation of the cost function, the gradient and an approximation of the Hessian are developed. For $m \times n$ mosaic Hankel matrices the  algorithms have complexity $O(m^2 n)$. 
\end{abstract}

\begin{keyword}
structured low-rank approximation \sep variable projection \sep mosaic Hankel matrices \sep weighted 2-norm \sep fixed elements \sep computational complexity

\MSC[2010]  15B99 \sep 15B05 \sep 41A29 \sep 49M30 \sep 65F30 \sep 65K05 \sep 65Y20
%% MSC codes here, in the form: \MSC code \sep code
%% or \MSC[2008] code \sep code (2000 is the default)
\end{keyword}

\end{frontmatter}

\section{Introduction}

An \textit{affine matrix structure} is an affine map from a \textit{structure
 parameter space} $\bbR^{\np}$ to a space of matrices $\bbR^{m \times n}$, defined by
\begin{equation}
\struct(p) = S_0 + \sum\limits_{i=1}^{\np} p_k S_k, %\quad S_k \in \bbR^{m\times n}, %\quad  p = \bmx p_1 & \ldots & p_{\np}\emx,
\label{eq:struct_def}\tag{$\struct$}
\end{equation}
where $S_k \in \bbR^{m\times n}$. Without loss of generality, we can consider only the case $m \le n$. The \textit{structured low-rank approximation} is the problem of finding the best
low-rank structure-preserving approximation of a given data matrix %. %We use formulation from 
 \cite{Markovsky08A-Structured,Markovsky12-Low}.

\begin{problem}[Structured low-rank approximation]\label{prob:slra1}
Given an affine structure $\struct$, data vector $\sdat{p} \in \bbR^{\np}$,  norm $\|\cdot\|$  and natural number $r < \min(m,n)$ 
\begin{equation}
\minim_{\sest{p} \in \bbR^{\np}} \| \sdat{p} - \sest{p} \| \;  \sto \;
\rrank \struct(\sest{p}) \le r.
\label{eq:prob_slra}\tag{SLRA}
\end{equation}
\end{problem}
In this paper, we consider the case of a \textit{weighted $2$-norm}, given by 
\begin{equation}
\|p\|^2_\wmat := p^{\top} \wmat p, \quad \wmat \in \bbR^{\np \times \np},
\label{eq:wnorm}\tag{$\|\cdot\|^2_\wmat$}
\end{equation}
where $\wmat$ is
\begin{itemize}
\item either  a symmetric positive definite matrix,
\item or a diagonal matrix
\begin{equation}
\wmat = \diag(w_1,\ldots,w_{\np}),\quad w_i \in (0; \infty],
\label{eq:wtoW}\tag{$w \rightarrow \wmat$}
\end{equation}
 where $\infty\cdot 0 = 0$ by convention. A \textit{finiteness} constraint  $\| \sdat{p} - \sest{p} \|^2_{\wmat} < \infty$ is additionally imposed on \eqref{eq:prob_slra}. Problem \eqref{eq:prob_slra} with the weighted norm \eqref{eq:wnorm} given by \eqref{eq:wtoW} is equivalent to \eqref{eq:prob_slra} with an \textit{element-wise weighted} 2-norm 
\[
\|p\|^2_{w} = \sum_{w_i \neq \infty} w_i p^2_i,
\]
and a set of \textit{fixed values} constraints
\[
(\sdat{p})_i = \sest{p}_i \;\mbox{for all}\; i \;\mbox{with}\; w_i = \infty. 
\]
\end{itemize}
The structured low-rank approximation problem with the weighted $2$-norm appears in signal processing, computer algebra, identification of dynamical systems, and other applications. We refer the reader to \cite{Markovsky08A-Structured,Markovsky12-Low} for an overview. In this paper, we consider general affine structures \eqref{eq:struct_def} and, in particular, structures that have the form
\begin{equation}
\struct(p) = \Phi \HH_{\bfm, \bfn}(p),
\label{eq:mosaic_like}\tag{$\Phi\HH_{\bfm, \bfn}$}
\end{equation}
where $\Phi$ is a full row rank matrix and $\HH_{\bfm, \bfn}$ is a mosaic Hankel \cite{Heinig95LAA-Generalized} matrix structure.

Many data modeling problems can be reduced to \eqref{eq:prob_slra} with the structure \eqref{eq:mosaic_like} and weighted norm, defined by \eqref{eq:wtoW}, see \cite{Markovsky08A-Structured,Markovsky.Usevich13JCAM-Software}. In data modeling, the number of rows $m$ usually has the meaning of the model complexity and the number of columns $n$ is of the same order as the number of data points \cite{Markovsky12-Low}. Typically, the case $m \ll n$ is of interest, i.e. approximation of a large amount of data by a low-complexity model.

\subsection{Mosaic Hankel structure}

A \textit{mosaic Hankel} matrix structure $\HH_{\bfm,\bfn}$ \cite{Heinig95LAA-Generalized} is a map defined by two integer vectors
\begin{equation}
\mathbf{m} = \bmx m_1 & \cdots & m_q \emx \in \bbN^q 
\qquad\text{and}\qquad
\mathbf{n} = \bmx n_1 & \cdots & n_N \emx \in \bbN^N
\label{mn}\tag{$\bfm,\bfn$}
\end{equation}
as follows:
\begin{equation}
\HH_{\bfm, \bfn}(p) \defeq 
\begin{bmatrix} 
\HH_{m_1,n_1}({p}^{(1,1)})     & \cdots & \HH_{m_1,n_N}({p}^{(1,N)}) \\ 
\vdots                    &        & \vdots \\ 
\HH_{m_q,n_1}({p}^{(q,1)})     & \cdots & \HH_{m_q,n_N}({p}^{(q,N)}) 
\end{bmatrix} , 
\label{eq:mosaic}\tag{$\HH_{\bfm, \bfn}$}
\end{equation}
where
\begin{equation}
p = \vcol(p^{(1,1)}, \ldots, p^{(q,1)},\ldots,p^{(1,N)}, \ldots, p^{(q,N)}), 
\qquad
p^{(k,l)} \in \bbR^{m_k+n_l-1},
\label{eq:p}\tag{$p$}
\end{equation}
is the partition of the parameter vector, and $\HH_{m,n}: \bbR^{m+n-1} \to \bbR^{m\times n}$ is the \textit{Hankel} structure
\[
\HH_{m,n}(p) :=
\begin{bmatrix} 
p_1    & p_2     & p_3     & \cdots    & p_n \\
p_2    & p_3     & \iddots &           & p_{n+1} \\
p_3    & \iddots &         &           &  \vdots\\
\vdots &         &         &           & p_{m+n-2}\\
p_m    & p_{m+1} & \cdots  & p_{m+n-2} & p_{m+n-1}
\end{bmatrix}.
\]
Note that the number of parameters for \eqref{eq:mosaic} is equal to
\[
\np = N \sum\limits_{k=1}^q m_k + q \sum\limits_{l=1}^N n_l - Nq,
\]
the number of columns is equal to $\sum\limits_{k=1}^{q} m_k$, and the number of rows is $\sum\limits_{l=1}^{N} n_l$.

The mosaic Hankel  structure is a generalization of the \textit{block-Hankel} structure, which is defined as
\begin{equation}
\HH_{m_1,n_1}(C) \defeq
\begin{bmatrix} 
C_{1,:,:}    & C_{2,:,:}       & \cdots          & C_{n_1,:,:}      \\
C_{2,:,:}    & \iddots   &         & C_{n_1+1,:,:}    \\
\vdots &  \iddots         &                       & \vdots \\
C_{m_1,:,:} & C_{m_1+1,:,:} &  \cdots &   C_{m_1+n_1-1,:,:}
\end{bmatrix},
\label{eq:blh}\tag{$\HH_{m_1,n_1}(C)$}
\end{equation}
where $C \in \bbR^{(m_1+n_1-1)\times q\times N}$ is a $3$-dimensional tensor. Indeed, consider permutation matrices
\begin{equation}
\Pi^{(k,l)} := \bmx \eyemat{k} \otimes \ortvec{1} & \ldots  & \eyemat{k} \otimes \ortvec{l} \emx^\top,
\label{eq:perm}\tag{$\Pi^{(k,l)}$}
\end{equation}
where $\otimes$ is the Kronecker product. Then the block-Hankel structure \eqref{eq:blh} can be transformed to a mosaic Hankel structure \eqref{eq:mosaic} by permutation of the rows and columns
\begin{equation}
\Pi^{(m_1, q)} \HH_{m_1, n_1} (C) (\Pi^{(n_1,N)})^{\top} = \HH_{\bsm m_1 & \cdots & m_1 \esm, \bsm n_1 & \cdots & n_1 \esm} (p),
\label{eq:bh_to_mosaic} \tag{$\HH_{m_1, n_1} (C) \leftrightarrow \HH_{\bfm, \bfn}$}
\end{equation}
where $p$ is defined as \eqref{eq:p} with $p^{(k,l)}_{i} \defeq C_{i,k,l}$ (an unfolding of the tensor $C$).

\subsection{Optimization methods and the variable projection}
Problem \eqref{eq:prob_slra} is non-convex and except for a few special cases (\eg~for unstructured matrices and Frobenius norm, for circulant matrices, and for some classes of square matrices, see \cite{Rump03SJMAA-Structured,Markovsky08A-Structured}) has no closed-form solution. 

Different optimization methods have been developed for different structures and approximation criteria, see~\cite{Markovsky08A-Structured} for a historical overview. 
% Cadzow~\cite{C88} proposed successive projections between $\mathfrak{S}$ and the set of low-rank matrices,
%but the obtained solution was shown to be suboptimal~\cite{DM94}. Fazel propozed convex relaxations \cite{F02} for closely related rank minimization problem. 
Many optimization methods use the fact that the rank constraint $\rrank \struct (\sest{p}) \le r$ is equivalent to existence of a full row rank matrix $R \in \bbR^{d\times m}$ satisfying $R\struct (\sest{p}) = 0$, where $d \defeq m-r$ is the \textit{rank reduction} in \eqref{eq:prob_slra}. Problem \eqref{eq:prob_slra} then can be rewritten (for weighted $2$-norm) as  
\begin{equation}
\minim_{R \in \bbR^{d \times m}, \sest{p} \in \bbR^{\np}} \|\sdat{p} - \sest{p}\|^2_{\wmat} \quad \sto \; \rrank R = d \quad\text{and}\quad R \struct (\sest{p}) = 0.
\label{eq:ker_slra1}\tag{SLRA$_R$}
\end{equation}
Methods for \eqref{eq:ker_slra1} include Riemannian SVD \cite{DeMoor93LAA-Structured}, structured total least norm approach \cite{Rosen.etal96SJMAA-Total,Mastronardi.etal00SJMAA-Fast}, and methods based on the variable projection principle. Note that most of the optimization methods mentioned above were developed for
the \textit{structured total least squares} problem, which is the problem \eqref{eq:ker_slra1} with an additional constraint 
\begin{equation}
R = \bmx X & -\eyemat{d} \emx, \quad X \in \bbR^{d\times r}.
\label{eq:ker_slra11}\tag{STLS}
\end{equation}
%imposed on \eqref{eq:ker_slra1} which is a restriction of \eqref{eq:ker_slra1} to $R$ of form $\bmx X & -I_{d} \emx$. 
Most of the methods listed above were designed only for special cases of the weighted $2$-norm \eqref{eq:wnorm}.

The \textit{variable projection} approach was proposed in \cite{GolubPereyra73SJoNA-Differentiation} for separable nonlinear least squares problems. Variable projection was first applied for some special cases of \eqref{eq:ker_slra1} \cite{Manton.etal03ITSP-geometry,Markovsky.etal04NLAA-computation}. % (however, the idea of application of variable projection to \eqref{eq:ker_slra1} can be traced back to \cite{AY70b} ).
% for unstructured weighted case and \cite{Markovsky08A-Structured, Markovsky12-Low} for structured case). 
In the variable projection approach, \eqref{eq:ker_slra1} is rewritten as
\begin{align}
&\minim_{R\in \bbR^{d\times m}, \rrank R = d } \costfun(R),\quad \text{where}& 
\label{eq:ker_slra}\tag{OUTER} \\
&\costfun(R) \defeq  %\costfun_{\struct, \sdat{p}}(R) \defeq
\Big( \min_{\sest{p} \in \bbR^{\np}} \|\sdat{p} - \sest{p}\|^2_{\wmat} \quad \sto \; R \struct(\sest{p}) = 0\Big).& \label{eq:ker_slra_red}\tag{$f(R)$}
\end{align}
The inner minimization problem \eqref{eq:ker_slra_red} is a \textit{least-norm problem} \cite{BoydVandenberghe04-Convex} and has a closed form solution. Therefore \eqref{eq:prob_slra} is reduced to optimization of \eqref{eq:ker_slra_red} on a space of dimension $dm$, which is typically much smaller than the dimension $\np$ of the eliminated variable $\sest{p}$.

The cost function $f(R)$ depends only on the subspace spanned by the rows of the argument~$R$, \ie~$f(R_1) = f(R_2)$ if $\rowspan R_1 = \rowspan R_2$.
%has the following property.
%\begin{lemma}\label{lem:homog}
% $f(R_1) = f(R_2)$ for $\rowspan R_1 = \rowspan R_2$
%\end{lemma} 
%Lemma~\ref{lem:homog} shows that \eqref{eq:ker_slra_red} depends only on the subspace spanned by the rows of the argument $R$. 
Therefore, $f(R)$ can be considered as a function defined on the Grassmann manifold \cite{Absil.etal08-Optimization} of all $d$-dimensional subspaces of $\bbR^{n}$, and the problem \eqref{eq:ker_slra} is optimization on a Grassmann manifold. The optimization problem on a Grassmann manifold can be either transformed to an optimization problem on an Euclidean space (see \cite{Usevich.Markovsky12conf-Structured}), or solved by iterations in tangent spaces  (see, for example, \cite{Manton.etal03ITSP-geometry,Absil.etal08-Optimization}). Therefore, standard optimization routines can be used to minimize \eqref{eq:ker_slra}. 

The computation of the cost function has complexity $O(n^3)$ if the inner minimization problem \eqref{eq:ker_slra_red} is solved by general-purpose methods (\eg~the QR decomposition \cite{Golub.VanLoan96-Matrix}). For analytic computation of derivatives of $f(R)$, which can speed up the convergence of local optimization methods, only algorithms with complexity $O(n^3)$ were proposed in the case of general affine structure \cite{Borsdorf12-Structured}. 

In \cite{Markovsky.etal04NLAA-computation,Markovsky.etal06-Exact} it was shown that for a class of structures the cost function \eqref{eq:ker_slra_red} and its gradient can be evaluated in $O(mn)$ flops, which leads to efficient  local optimization methods for \eqref{eq:prob_slra}. The structures considered in \cite{Markovsky.etal04NLAA-computation,Markovsky.etal06-Exact} are of the form  $\rowvec{\struct^{(1)}}{\struct^{(q)}}^{\top}$, where each block $\struct^{(l)}$ is block-Toeplitz, block-Hankel or unstructured, and only whole blocks can be fixed. Only the $2$-norm approximation criterion was considered and the constraint  \eqref{eq:ker_slra11} on $R$ was required.

%For general affine structure, however, the structure of the function \eqref{eq:ker_slra_red} was not studied. A generic algorithm for cost function evaluation has been proposed so far~\cite{Markovsky08A-Structured, Markovsky12-Low}, which has in general computational complexity $O(n^3)$. First-order and second-order derivatives of \eqref{eq:ker_slra_red} were utilized only in \cite{Borsdorf12-Structured}, where the algorithms still have computational complexity $O(n^3)$.

%In this paper we extend the technique used in~\cite{MV03a} to develop algorithms for evaluation of \eqref{eq:ker_slra_red} and its derivatives for general affine structure, weighted 2-norm and fixed parameter values.  Note that in the paper~\cite{MV03a} only the restricted problem \eqref{eq:ker_slra11} was considered and the derivatives of $f(\bsm X & -I \esm)$ were computed. In addition, in~\cite{MV03a} the structured matrices with $m \ge n$ were considered. Therefore our notation and derivations differ from those in~\cite{MV03a}.

\subsection{Main results and composition of the paper}
In this paper we show that the cost function $f(R)$, its gradient and an approximation of its Hessian can be evaluated in $O(m^2 n)$ operations  for structure \eqref{eq:mosaic_like} and element-wise weighted $2$-norm (that allows fixed values constraints). If the weights are block-wise (or only whole blocks are fixed), the cost function and the gradient can be evaluated in $O(mn)$ operations, as in~\cite{Markovsky.VanHuffel05JCAM-High}.

We develop algorithms for evaluation of $f(R)$ and its gradient for general affine structure and arbitrary weighted 2-norm. The structure of $f(R)$ is derived in a similar way to~\cite{Markovsky.etal04NLAA-computation}, but in contrast to~\cite{Markovsky.etal04NLAA-computation,Markovsky.etal06-Exact} we do not use a probabilistic interpretation of~\eqref{eq:prob_slra}. Instead, we show how the matrix structure \eqref{eq:struct_def} is mapped to the structure of the cost function. In addition, our definition of the weighted 2-norm incorporates fixed values constraints, which also simplifies the derivation of the cost function structure in this case.

We provide an explicit derivation of Gauss-Newton approximations of the Hessian of $f(R)$, for the general affine structure and weighted $2$-norm. This derivation was omitted in~\cite{Markovsky.etal06-Exact} and other papers, but was used in the computational routines in~\cite{Markovsky.VanHuffel05JCAM-High}. We provide two variants of the approximation of the Hessian, based on two different representations of $f(R)$ as a sum of squares.

The paper is organized as follows. In Section~\ref{sec:gen_prop}, we %introduce some useful notation and 
review some basic properties of affine structures and \eqref{eq:prob_slra} in the weighted 2-norm.
 Section~\ref{sec:varpro} covers the derivation of $f(R)$ for the general affine structure and weighted 2-norm. For block structures, the cost function is expressed via cost functions for the blocks.
Based on results of Section~\ref{sec:varpro}, we develop general-purpose algorithms for evaluation of the cost function and its derivatives in Section~\ref{sec:algorithms}. Finally, in Section~\ref{sec:mosaic}, we specialize the algorithms from Section~\ref{sec:algorithms} for mosaic Hankel structures and derive their computational complexity. In Section~\ref{sec:gamma}, we discuss the conditions on $\struct$ and $r$ under which the algorithms in the paper are applicable; we also discuss accuracy of the computations.

\subsection{Notation}
We use lowercase letters for vectors and uppercase letter for matrices (square matrices are denoted by upright letters). 
By convention, blank elements in matrices denote zeros. Some of the notation used in the paper is summarized in Table~\ref{notation}. 
\begin{table}[htb!]
\centering
\begin{tabular}{p{3cm} p{0.5cm} p{8.5cm}}
$a_i, A_{i,j}$ &---& elements of vectors and matrices. \\
$k:l$ &---& the integer vector $\bmx k & \cdots & l\emx$; we also frequently use convention $i \in k:l$ to denote that an integer $i$  satisfies $k \le i \le l$.\\
$a_{\calI}, A_{\calI,\calJ}$ &---& sub-vectors and sub-matrices indexed by integer vectors; a single colon stands for selecting all indices, \eg~$A_{:,2}$ is the second column of the matrix $A$. \\
$0_{m\times n}$ &---& $m\times n$ matrix of zeros.\\ 
$1_{m\times n}$ &---& $m\times n$ matrix of ones.\\ 
$\eyemat{m}$ &---& $m \times m$ identity matrix $\bsm 1 & & \\ & \ddots& \\ & & 1 \esm$. \\
$\shiftmat{m}$ &---& $m \times m$ shift matrix,
\ie\;$\shiftmat{m} :=
\bsm
0_{(m-1)\times 1} & \eyemat{m-1} \\
0         & 0_{1\times (m-1)} \\
\esm.$ \\
$\ortvec{i}$ &---& $i$-th unit vector.\\ 
$\vcol(v^{(1)}, \ldots, v^{(N)})$ &---& concatenation of vectors $v^{(1)}, \ldots, v^{(N)}$.\\ 
$\blkdiag(A_1, \ldots, A_N)$ &---& block-diagonal matrix composed of matrices $A_1, \ldots, A_N$.\\ 
%$A \otimes B$  &---& Kronecker product of two matrices\\
$\text{vec} A$   &---& the column-wise vectorization of a matrix.\\
$\wmat^{-1}$   &---& either the inverse of $\wmat$ if $\wmat$ is nonsingular, 
 or $\diag(w_1^{-1}, \ldots, w_n^{-1})$ if $\wmat = \diag(w_1,\ldots, w_n)$ and $w_k~\in~(0;\infty]$ (with the convention $\infty^{-1} = 0$). \\
$\wmat^{-\top}$   &---& the transpose of the inverse (\ie~$(\wmat^{-1})^{\top}$). \\
$\rchol{\mathrm{W}}$ &---& either the right Cholesky factor of $\mathrm{W}$ if $\mathrm{W}$ is positive definite, or  $\diag(\sqrt{w_1}, \ldots \sqrt{w_n})$ 
  if $\wmat = \diag(w_1,\ldots, w_n)$ and $w_k~\in~(0;\infty]$ (with the convention $\sqrt{\infty} = \infty$).\\
\end{tabular}
\caption{Notation}\label{notation}
\end{table}

\section{Affine structures and weighted $2$-norms}\label{sec:gen_prop}

\subsection{Basic properties}
Affine structures can  be defined in an equivalent to 
\eqref{eq:struct_def} \textit{vector form} 
\begin{equation}
\mvec \struct(p) = \mvec S_0 + \maffine{\struct} \, p, 
\label{eq:struct_vec}\tag{$\mvec \struct$}
\end{equation}
where $\maffine{\struct}$ is the matrix representation of the linear part of  \eqref{eq:struct_def} in the basis $\{\ortvec{i} (\ortvec{j})^{\top}\}^{m,n}_{i,j=1}$:
\begin{equation}
\maffine{\struct} \defeq \bmx \mvec S_1 & \cdots & \mvec S_{\np} \emx.
\label{eq:lin_matr}\tag{$\maffine{\struct}$}
\end{equation}

\begin{example}\label{ex:hankel23} $2 \times 3$ Hankel matrices $\HH_{2,3}(p)$ can be represented as \eqref{eq:struct_def} with
\[
S_0 = 0_{2\times 3}, \;
S_1 = \bmx 1 & 0 & 0 \\ 0 & 0 & 0\emx, \;
S_2 = \bmx 0 & 1 & 0 \\ 1 & 0 & 0\emx, \;
S_3 = \bmx 0 & 0 & 1 \\ 0 & 1 & 0\emx, \;
S_4 = \bmx 0 & 0 & 0 \\ 0 & 0 & 1\emx.
\]
In this case,
\[
\maffine{\struct} = 
\bmx 
1 &   &   &   \\ 
  & 1 &   &   \\ 
  & 1 &   &   \\ 
  &   & 1 &   \\ 
  &   & 1 &   \\ 
  &   &   & 1
\emx.
\] 

\end{example}
Using \eqref{eq:struct_vec}, it is easy to show that the Frobenius norm is a weighted 2-norm.
\begin{note}\label{not:frob}
Let \eqref{eq:struct_def} be an injective map (which corresponds to linearly independent $\{S_k\}_{k=1}^{\np}$ or \eqref{eq:lin_matr}  with full column rank). Then 
\[
\|\struct(\sest{p}) - \struct(\sdat{p})\|^2_{\rm F} = 
\|\maffine{\struct} (\sest{p} - \sdat{p})\|^2_2 = 
\| \sest{p} - \sdat{p}\|^2_{\wmat_1},
\]
where $\wmat_1 = \maffine{\struct}^\top \maffine{\struct}$ is the Gramian of the system of matrices $\{S_k\}_{k=1}^{\np}$.
\end{note}

\begin{example}\label{ex:hankel_w}
In Example~\ref{ex:hankel23}, the Gramian $\maffine{\struct}^\top \maffine{\struct}$ is diagonal, and the weighted norm corresponding to the Frobenius norm is given by \eqref{eq:wtoW} with $w = \vcol(1, 2, 2, 1)$.
\end{example}
It is not difficult to show that the Frobenius norm in Note~\ref{not:frob} can be replaced by any weighted Frobenius norm (a weighted 2-norm of $\mvec \left(\struct(\sest{p}) - \struct(\sdat{p})\right)$).

\subsection{From weighted $2$-norm to $2$-norm}
Any \eqref{eq:prob_slra} problem with weighted $2$-norm can be reduced to an unweighted \eqref{eq:prob_slra} problem. Consider the following change of parameters
\begin{equation}
\sest{p} = \sdat{p} - \rchol{\wmat}^{-1} \sdelt{p}.
\label{eq:repar_w}\tag{$\sdelt{p} \rightarrow \sest{p}$}
\end{equation}
We define the transformed structure in the vector form \eqref{eq:struct_vec}:
\begin{equation}
\mvec \structdelt(\sdelt{p}) \defeq  
\mvec \struct(\sdat{p}) - \maffine{\struct} \rchol{\wmat}^{-1} \sdelt{p},
%\left(\struct\left(\rchol{\wmat^{-1}}^{\top} \sdelt{p}\right)-S_0\right),
\label{eq:struct_weight}\tag{$\structdelt$}
\end{equation}
By \eqref{eq:struct_vec}, \eqref{eq:struct_weight} satisfies 
\begin{equation}
\structdelt(\sdelt{p}) = \struct(\sest{p})
\label{eq:SdeqS}\tag{$\structdelt \leftrightarrow \struct$}
\end{equation}
for any $\Delta p$. If $\wmat$ in \eqref{eq:wnorm} is positive definite, then by $\wmat = \rchol{\wmat}^{\top}\rchol{\wmat}$ we have that
\begin{equation}
\|\sest{p} - \sdat{p}\|_{\wmat}^{2} =   \sdelt{p}^{\top} \rchol{\wmat}^{-\top} \wmat \rchol{\wmat}^{-1}  \sdelt{p} =  \|\sdelt{p} \|_2^2.
\label{eq:wnorm_2norm}\tag{$\|\cdot\|_{\wmat} = \|\cdot\|_2$}
\end{equation}
Note that \eqref{eq:wnorm_2norm} does not hold for \eqref{eq:wtoW}.
Nevertheless, the following result holds.

\begin{proposition}\label{prop:weighted_cases}
Any problem \eqref{eq:prob_slra} with \eqref{eq:wnorm} is equivalent to
\begin{equation}
\minim_{\sdelt{p} \in \bbR^{\np}} \| \sdelt{p} \|_2^2 \;  \sto \;
\rrank \structdelt(\sdelt{p}) \le r.
\label{eq:wslra_reduced}\tag{SLRA$_\Delta$}
\end{equation}
\end{proposition}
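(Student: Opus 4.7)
My plan is to split the proof along the two cases listed for $\wmat$ in the definition \eqref{eq:wnorm} and show that in each case the map \eqref{eq:repar_w} together with the identity \eqref{eq:SdeqS} establishes a value-preserving correspondence between feasible points of \eqref{eq:prob_slra} and feasible points of \eqref{eq:wslra_reduced}.

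In the positive-definite case this is almost immediate: $\rchol{\wmat}$ is invertible, so \eqref{eq:repar_w} is a bijection between $\bbR^{\np}$ and $\bbR^{\np}$; the rank-constraint equivalence follows from \eqref{eq:SdeqS}, since $\struct(\sest{p}) = \structdelt(\sdelt{p})$ implies the rank constraints $\rrank \struct(\sest{p}) \le r$ and $\rrank \structdelt(\sdelt{p}) \le r$ are identical; and the equivalence of objectives is exactly \eqref{eq:wnorm_2norm}. So in this case the two problems have the same feasible set (under the bijection) and the same cost, hence any minimizer of one yields a minimizer of the other.

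The more delicate case is \eqref{eq:wtoW} with some $w_i = \infty$. Here $\rchol{\wmat}^{-1}$ is diagonal with $(\rchol{\wmat}^{-1})_{ii} = 0$ for those $i$ with $w_i = \infty$, so \eqref{eq:repar_w} is no longer injective: those coordinates of $\sdelt{p}$ are annihilated. The plan is to partition the indices into $I_f = \{i : w_i < \infty\}$ and $I_\infty = \{i : w_i = \infty\}$, and argue as follows. First, for any $\sest{p}$ satisfying the finiteness constraint $\sest{p}_{I_\infty} = \sdat{p}_{I_\infty}$, define $\sdelt{p}_i \defeq \sqrt{w_i}(\sdat{p}_i - \sest{p}_i)$ for $i \in I_f$ and $\sdelt{p}_i \defeq 0$ for $i \in I_\infty$; then \eqref{eq:repar_w} recovers $\sest{p}$ and $\|\sdelt{p}\|_2^2 = \sum_{i \in I_f} w_i (\sdat{p}_i - \sest{p}_i)^2 = \|\sest{p} - \sdat{p}\|_\wmat^2$ using the convention $\infty \cdot 0 = 0$; feasibility of $\sdelt{p}$ in \eqref{eq:wslra_reduced} follows from \eqref{eq:SdeqS}. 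Conversely, given any feasible $\sdelt{p}$ for \eqref{eq:wslra_reduced}, the vector $\sest{p}$ produced by \eqref{eq:repar_w} automatically satisfies $\sest{p}_{I_\infty} = \sdat{p}_{I_\infty}$ (hence the finiteness constraint), its structured matrix has rank at most $r$ by \eqref{eq:SdeqS}, and $\|\sest{p} - \sdat{p}\|_\wmat^2 = \sum_{i \in I_f} \sdelt{p}_i^2 \le \|\sdelt{p}\|_2^2$; moreover the auxiliary $\widetilde{\sdelt{p}}$ obtained from this $\sest{p}$ by the previous construction is feasible, has cost equal to $\|\sest{p}-\sdat{p}\|_\wmat^2$, and differs from $\sdelt{p}$ only on $I_\infty$.

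The main obstacle, and the reason a separate argument is needed in the second case, is precisely this non-injectivity on $I_\infty$. The key observation that unlocks the proof is that because $(\rchol{\wmat}^{-1})_{ii} = 0$ on $I_\infty$, the components $\sdelt{p}_{I_\infty}$ influence neither $\structdelt(\sdelt{p})$ (via \eqref{eq:struct_weight}) nor the constraint in \eqref{eq:wslra_reduced}, while they can only increase the cost $\|\sdelt{p}\|_2^2$; hence at any optimum they vanish, and the correspondence becomes a bijection between optimal solutions of the two problems. Combining the two cases yields the claimed equivalence.
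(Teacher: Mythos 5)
Your proof is correct and follows essentially the same route as the paper: the same change of variables \eqref{eq:repar_w}, the same case split on $\wmat$, and the same key decomposition $\|\sdelt{p}\|_2^2 = \|\sest{p}-\sdat{p}\|^2_{\wmat} + \sum_{w_k=\infty}(\sdelt{p}_k)^2$ combined with the observation that the coordinates indexed by $I_\infty$ do not affect $\structdelt(\sdelt{p})$. The only cosmetic difference is that you conclude $\sdelt{p}_{I_\infty}=0$ at an optimum by direct monotonicity of the cost, whereas the paper invokes the first-order optimality conditions; both are valid.
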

\proofapp

\section{Variable projection}\label{sec:varpro}

\subsection{Variable projection for the weighted $2$-norm}
In this subsection, we derive an explicit expression for the cost function \eqref{eq:ker_slra_red} for the general affine structure \eqref{eq:struct_def} and weighted $2$-norm \eqref{eq:wnorm}. Consider the change of variables \eqref{eq:repar_w}. By Proposition~\ref{prop:weighted_cases} and \eqref{eq:SdeqS}, the optimization problem in \eqref{eq:ker_slra_red} is equivalent to
\begin{equation}
\begin{split}
&\minim_{\sdelt{p} \in \bbR^{\np}} \|\sdelt{p}\|_2^2 \\
&\sto R \structdelt(\sdelt{p}) = 0, % G(R) \rchol{\wmat^{-1}}^{\top} \Delta p  = \resd{R},
\end{split}\label{eq:lin_least_norm0}\tag{LN$_\Delta$}
\end{equation}
By \eqref{eq:struct_weight} and properties of the vectorization operator, \eqref{eq:lin_least_norm0}
%\eqref{eq:struct_vec} and \eqref{eq:SdeqS}, 
%the constraint in \label{eq:lin_least_norm}
%\[
%\mvec (R \struct (\sest{p})) =  (\eyemat{n} \otimes R) \mvec \struct(\sest{p}) = 
% (\eyemat{n} \otimes R) \left(\mvec \struct(\sdat{p})  - \maffine{\struct} \rchol{\wmat^{-1}}^{\top} \sdelt{p}\right).
%\]
%Then, the optimization problem in \eqref{eq:ker_slra_red} 
can be rewritten as 
\begin{equation}
\begin{split}
&\minim_{\sdelt{p} \in \bbR^{\np}} \|\sdelt{p}\|_2^2 \\
&\sto G(R) \Delta p  = \resd{R},
\end{split}\label{eq:lin_least_norm}\tag{LN}
\end{equation}
where $\resd{R} \in \bbR^{dn}$ and $G(R) \in \bbR^{dn \times \np}$ are defined as
\begin{align}
&\resd{R} \defeq  \mvec \left( R \struct(\sdat{p}) \right) = (\eyemat{n} \otimes R) \mvec \struct(\sdat{p}), \label{eq:resd}\tag{$\resd{R}$}\\
&G(R) \defeq (\eyemat{n} \otimes R) \maffine{\struct} \rchol{\wmat}^{-1} = 
\bmx \mvec \left( R S_1\right) & \cdots & \mvec \left( R S_{\np}\right) \emx \rchol{\wmat}^{-1}.
\label{eq:lin_matr2}
\tag{$G(R)$}
\end{align}
Problem \eqref{eq:lin_least_norm} is a linear \textit{least-norm problem} in a standard form \cite[Ch.~6]{BoydVandenberghe04-Convex}, and has a closed-form solution. If $G(R)$ is of full row rank, the solution of \eqref{eq:lin_least_norm} exists and is given by
\begin{align}
&\sdelt{p}^{*} (R) =   G^{\top} (R) \Gamma^{-1}(R)  \resd{R},  
\label{eq:ker_slra_cos2t} \tag{$\sdelt{p}^{*}(R)$}\\
&\costfun(R) := \|\Delta p^{*}(R)\|^2_2 = {\resds^\top(R)} \Gamma^{-1}(R)  \resd{R},
\label{eq:f_unweighted}\tag{$\resds^\top \Gamma^{-1}\resds$}
\end{align}
where
\begin{equation}
\Gamma = \Gamma(R) :=  G(R) G^{\top} (R) \in \bbR^{dn\times dn}.
\label{eq:GammaG}\tag{$\Gamma(R)$}
\end{equation}

\begin{note}
If $G(R)$ is not of full row rank but the problem \eqref{eq:lin_least_norm} is feasible, the solution of \eqref{eq:lin_least_norm} is given by replacing the inverse of $\Gamma$ with its pseudoinverse in \eqref{eq:ker_slra_cos2t} and \eqref{eq:f_unweighted}.
\end{note}
In what follows, we assume that $G(R)$ is of full row rank, which is equivalent to $\Gamma(R)$  being invertible. A necessary condition for this is
\begin{equation}
\np \ge nd,
\label{eq:NC}\tag{NC}
\end{equation}
i.e. the number of rows of $G(R)$ should not exceed the number of its columns.  We discuss the invertibility  conditions of $\Gamma$ for mosaic Hankel matrices in Section~\ref{sec:gamma}.

\begin{note}
Since $\Gamma$ is a polynomial matrix (in the entries of $R$), it can be inverted symbolically  if it is nonsingular at least for one $R$. In this case, $\Gamma$ is infinitely differentiable for all $R$ such that $\Gamma(R)$ is nonsingular
\cite{Usevich.Markovsky12conf-Structured}.
\end{note}

\subsection{Block partitioning of $\Gamma(R)$}

Next, we show how the computation of $\Gamma$ can be simplified. Let us define 
\begin{equation}
\cmat = \cmat_{\struct, \wmat} \defeq \maffine{\struct} \wmat^{-1} \maffine{\struct}^\top \in \bbR^{mn\times mn}.
\label{eq:cmat_struct}\tag{$\cmat$}
\end{equation}
Consider the following block partitioning of $\Gamma$ and $\cmat$:
\begin{equation*}
\Gamma(R)  =
\bmx 
\mblk{\Gamma}{1}{1} (R) & \cdots  & \mblk{\Gamma}{1}{n} (R) \\
\vdots           &        & \vdots                 \\
\mblk{\Gamma}{n}{1} (R) & \cdots  & \mblk{\Gamma}{n}{n} (R) \\
\emx, \quad
\cmat = 
\bmx 
\mblk{\cmat}{1}{1}  & \cdots  & \mblk{\cmat}{1}{n} \\
\vdots       &        & \vdots      \\
\mblk{\cmat}{n}{1}  & \cdots  & \mblk{\cmat}{n}{n} \\
\emx,
\end{equation*}
where   $\mblk{\Gamma}{i}{j}(R) \in \bbR^{d \times d}$ and $\mblk{\cmat}{i}{j} \in \bbR^{m \times m}$. Then, 
$\mblk{\Gamma}{i}{j}(R)$ and $\mblk{\cmat}{i}{j}$ are related by
\begin{equation}
 \mblk{\Gamma}{i}{j} (R) \defeq R \mblk{\cmat}{i}{j} R^{\top}.
\label{eq:gamma_block}\tag{$\mblk{\Gamma}{i}{j}$}
\end{equation}
Indeed, from \eqref{eq:GammaG} and \eqref{eq:lin_matr2} and the fact that $\rchol{\wmat}^{-1}\rchol{\wmat}^{-\top} = {\wmat^{-1}}$ we have that
\[
\Gamma(R) = (\eyemat{n} \otimes R) \cmat (\eyemat{n} \otimes R^{\top}) = 
\blkdiag(R, \ldots, R) \cmat \blkdiag(R^{\top}, \ldots, R^{\top}),
\]
which proves \eqref{eq:gamma_block}.

\begin{note}\label{note:pinvgamma}
The structure of $\Gamma$ is determined by the structure of $\cmat$. For example, if $\cmat$ is block-sparse, block-banded or block-Toeplitz, then $\Gamma$ has the same structural property. The structural properties of $\cmat$ depend on the matrix structure $\struct$ and the weight matrix $\wmat$.
\end{note}

A partition of type \eqref{eq:gamma_block}  was derived in \cite{Markovsky.etal04NLAA-computation,Markovsky.etal06-Exact} from statistical considerations for a class of structures. Here we have derived it for general $\struct$ and $\wmat$ via a series of algebraic transformations. 

\begin{example}\label{ex:hankel4}
In Example~\ref{ex:hankel_w}, $G(R)$ has the form
\[
G(R) =
\bmx 
R_{1,1} & R_{1,2} &         &   \\ 
        & R_{1,1} & R_{1,2} &   \\ 
        &         & R_{1,1} & R_{1,2} \\ 
\emx
\diag(1,\sqrt{2}, \sqrt{2}, 1)
\]
for $R \in \bbR^{1\times 2}$. In addition, $\cmat$ has the form
\[
\cmat_{\struct,\wmat} 
= 
\bmx 
1 &   &   &   &   &   \\ 
  & 2 & 2 &   &   &   \\ 
  & 2 & 2 &   &   &   \\ 
  &   &   & 2 & 2 &   \\ 
  &   &   & 2 & 2 &   \\ 
  &   &   &   &   & 1
\emx
\] 
\end{example}
Examples~\ref{ex:hankel23}~and~\ref{ex:hankel4} will be generalized in Section~\ref{sec:scal_hank} to $\HH_{m,n}$ with arbitrary $m$ and $n$.
%In particular, the structure of $\Gamma$ in .

\subsection{Variable projection for block matrices}\label{sec:varpro_tools}
In this subsection, we consider basic transformations of structures and derive the form of \eqref{eq:ker_slra_red} for these transformed structures. First, we consider striped and layered block matrices.
\begin{lemma}[Striped structure]\label{lem:striped}
Let $p = \vcol (p^{(1)}, \ldots, p^{(N)})$, with $p^{(l)} \in \bbR^{\np^{(l)}}$ and 
\begin{equation}
\struct(p) = \bmx\struct^{(1)}(p^{(1)}) & \cdots & \struct^{(N)}(p^{(N)}) \emx,
\label{eq:S_striped}\tag{striped $\struct$}
\end{equation}
be the striped structure,  $\struct^{(l)}: \bbR^{\np^{(l)}} \to \bbR^{m \times n_l}$.
Then for $\struct$ and $\wmat = \blkdiag(\wmat^{(1)},\ldots,\wmat^{(N)})$, $\wmat^{(l)} \in \bbR^{\np^{(l)}\times\np^{(l)}}$, we have that:
\begin{enumerate}
\item the cost function \eqref{eq:ker_slra_red} is equal to the sum 
\begin{equation}
\costfun(R)  %\costfun_{\struct,\sdat{p}}(R) 
= \sum\limits_{l=1}^N \costfun^{(l)}(R),
\label{eq:f_striped}\tag{striped $\costfun$}
\end{equation} 
where $\costfun^{(l)}$ is the cost function \eqref{eq:ker_slra_red} for the structure $\struct^{(l)}$ and the weight matrix $\wmat^{(l)}$;

\item the optimal correction $\Delta p^{*}(R)$ is the concatenation 
\begin{equation}
\sdelt{p}^{*}(R) = 
\vcol\big(\Delta p^{(1)*} (R), \ldots, \Delta p^{(N)*} (R)\big)
\label{eq:corr_striped}\tag{striped $\sdelt{p}^{*}$}
\end{equation}
of the corrections \eqref{eq:ker_slra_cos2t} for the structures $\struct^{(l)}$ and weight matrices $\wmat^{(l)}$.
\end{enumerate}
\end{lemma}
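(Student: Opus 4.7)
My plan is to exploit the block structure that a striped $\struct$ induces on all of the objects $\maffine{\struct}$, $\cmat$, $G(R)$, $\Gamma(R)$, and $\resd{R}$ appearing in the derivation of \eqref{eq:f_unweighted}, and then read off both conclusions from the resulting block-diagonal formula for $\Gamma(R)$.

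First I would note that, because the column-wise vectorization $\mvec$ respects horizontal concatenation, \eqref{eq:S_striped} gives
\[
\mvec \struct(p) = \vcol\bigl(\mvec \struct^{(1)}(p^{(1)}), \ldots, \mvec \struct^{(N)}(p^{(N)})\bigr),
\]
and since $p^{(l)}$ affects only the $l$-th stripe, the matrix \eqref{eq:lin_matr} takes the block-diagonal form $\maffine{\struct} = \blkdiag\bigl(\maffine{\struct^{(1)}}, \ldots, \maffine{\struct^{(N)}}\bigr)$. Combining this with the block-diagonality of $\wmat$ and substituting into \eqref{eq:cmat_struct} yields $\cmat = \blkdiag(\cmat^{(1)}, \ldots, \cmat^{(N)})$, where $\cmat^{(l)} = \maffine{\struct^{(l)}} (\wmat^{(l)})^{-1} \maffine{\struct^{(l)}}^{\top}$.

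Next I would apply the identity $\Gamma(R) = (\eyemat{n} \otimes R)\,\cmat\,(\eyemat{n} \otimes R^{\top})$ derived just before \eqref{eq:gamma_block}. Since $n = n_1 + \cdots + n_N$, the factor $\eyemat{n} \otimes R = \blkdiag(\eyemat{n_1}\otimes R, \ldots, \eyemat{n_N}\otimes R)$ is compatible with the block partition of $\cmat$, giving
\[
\Gamma(R) = \blkdiag\bigl(\Gamma^{(1)}(R), \ldots, \Gamma^{(N)}(R)\bigr),
\qquad
\resd{R} = \vcol\bigl(\resd{R}^{(1)}, \ldots, \resd{R}^{(N)}\bigr),
\]
with $\Gamma^{(l)}(R)$ and $\resd{R}^{(l)}$ being the corresponding objects for $\struct^{(l)}$ and $\wmat^{(l)}$. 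An analogous argument produces $G(R) = \blkdiag(G^{(1)}(R), \ldots, G^{(N)}(R))$ (up to a row permutation matching the block layout of $\resd{R}$).

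Plugging these block-diagonal forms into \eqref{eq:f_unweighted} immediately gives
\[
\costfun(R) = \sum_{l=1}^{N} \bigl(\resd{R}^{(l)}\bigr)^{\top} \bigl(\Gamma^{(l)}(R)\bigr)^{-1} \resd{R}^{(l)} = \sum_{l=1}^{N} \costfun^{(l)}(R),
\]
which is \eqref{eq:f_striped}, and substituting into \eqref{eq:ker_slra_cos2t} yields $\sdelt{p}^{*}(R) = \vcol\bigl(G^{(l)\,\top}(R)(\Gamma^{(l)}(R))^{-1}\resd{R}^{(l)}\bigr)_{l=1}^{N}$, which is \eqref{eq:corr_striped}. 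I do not expect any serious obstacle: the only care needed is to track the block-partitioning convention (making sure the ordering of $\mvec \struct(p)$ agrees with that of $\wmat$), and to note that full row rank of $G(R)$ is equivalent to full row rank of each $G^{(l)}(R)$, so the invariant assumption preceding \eqref{eq:NC} transfers consistently from $\struct$ to its stripes.
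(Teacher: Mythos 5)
Your proof is correct, but it takes a genuinely different route from the paper's. The paper argues directly at the level of the optimization problem: since the constraint $R\,\structdelt(\sdelt{p})=0$ splits column-block-wise into $N$ independent constraints $R\,\structdelt^{(l)}(\sdelt{p}^{(l)})=0$, and the objective $\|\sdelt{p}\|_2^2=\sum_{l}\|\sdelt{p}^{(l)}\|_2^2$ is separable, the inner least-norm problem decouples into $N$ independent subproblems; \eqref{eq:corr_striped} and \eqref{eq:f_striped} then follow in two lines without ever invoking the closed-form solution. You instead push the block structure through the whole chain $\maffine{\struct}\to\cmat\to G(R),\,\Gamma(R),\,\resd{R}$ and read the conclusions off \eqref{eq:f_unweighted} and \eqref{eq:ker_slra_cos2t}. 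This costs more bookkeeping and makes invertibility of each $\Gamma^{(l)}(R)$ an explicit ingredient (which, as you correctly note, is equivalent to the standing full-row-rank assumption on $G(R)$), but it also buys something: the decompositions $\Gamma(R)=\blkdiag\bigl(\Gamma^{(1)}(R),\ldots,\Gamma^{(N)}(R)\bigr)$ and $G(R)=\blkdiag\bigl(G^{(1)}(R),\ldots,G^{(N)}(R)\bigr)$ are facts the paper relies on later (e.g.\ when it refers to ``the $\Gamma$ matrix $\Gamma^{(l)}$ for the structure $\HH_{\bfm,n_l}$, as in Lemma~\ref{lem:striped}'') but never proves; your argument supplies them. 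One cosmetic remark: with the stated orderings of $p$ and of $\mvec\struct(p)$, the matrix $G(R)$ is exactly block-diagonal, so the ``up to a row permutation'' hedge is unnecessary.
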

\begin{proof}
The inner minimization problem \eqref{eq:lin_least_norm0} can be expressed as
\[
\minim_{\Delta p^{(l)} \in \bbR^{\np^{(l)}}} \sum\limits_{l=1}^{N} \|\Delta p^{(l)}\|_2^2
\;\sto\;  R \structdelt^{(l)} (\Delta p^{(l)}) = 0, \quad\text{for }\quad l \in 1:N.
\]
This sum of squares is minimized by \eqref{eq:corr_striped}, and its norm is given by \eqref{eq:f_striped}.
\end{proof}

\begin{lemma}[Layered structure]\label{lem:layered}
Let $p = \vcol (p^{(1)}, \ldots, p^{(q)})$, with $p^{(k)} \in \bbR^{\np^{(k)}}$ and 
\begin{equation*}
\struct(p) = \bmx\struct^{(1)}(p^{(1)}) \\ \vdots \\ \struct^{(q)}(p^{(q)}) \emx,
%\label{eq:S_layered}\tag{layered $\struct$}
\end{equation*}
be a \textit{layered structure}, $\struct^{(k)}: \bbR^{\np^{(k)}} \to \bbR^{m_k \times n}$.
Then for $\struct$  and %a weight matrix 
%\[
$\wmat = \blkdiag(\wmat^{(1)},\ldots,\wmat^{(q)})$, $\wmat^{(k)} \in \bbR^{\np^{(k)}\times\np^{(k)}}$
%\]
we have that:
\begin{enumerate}
\item\label{it:g_layered} the $G$ matrix is 
\[
G(R) = \bmx G^{(1)}(R^{(1)}) & \cdots &  G^{(q)}(R^{(q)})\emx,
\]
where 
\begin{equation}
R =\bmx R^{(1)} & \cdots & R^{(q)} \emx
\label{eq:Rpart}\tag{$R^{(k)}$}
\end{equation}
is the partition of $R$ into $R^{(k)} \in \bbR^{d \times m_k}$;

\item the $\Gamma$ matrix is equal to the sum 
\[
\Gamma(R) = \sum\limits_{k=1}^q \Gamma^{(k)}(R^{(k)})
\]
of the matrices \eqref{eq:GammaG} corresponding to \eqref{eq:prob_slra} with $\struct^{(k)}$ and $\wmat^{(k)}$;

\item the matrix \eqref{eq:cmat_struct}  for $\struct$ and $\wmat$ is composed of the blocks
\[
\mblk{\cmat}{i}{j} = \blkdiag(\mblk{\cmat^{(1)}}{i}{j},\ldots,\mblk{\cmat^{(q)}}{i}{j}),
\]
where $\cmat^{(k)}$ is the matrix \eqref{eq:cmat_struct} for the structure $\struct^{(k)}$ and the weight matrix $\wmat^{(k)}$;

\item the optimal correction $\Delta p^{*}(R)$ can be expressed as 
\[
\Delta p^{*}(R) = \vcol\big(\Delta p^{(1)*} (R), \ldots, \Delta p^{(N)*} (R)\big),
\]
where $\sdelt{p}^{(k)*} (R) \defeq (G^{(k)}(R^{(k)}))^{\top} \Gamma^{-1}(R) \resd{R}$.
\end{enumerate}
\end{lemma}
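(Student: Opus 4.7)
The plan is to establish the four items in sequence, each as a direct consequence of the layered form of $\struct$ combined with the block-diagonal structure of $\wmat$. The central identity I would rely on is
\[
R \struct(p) = \sum_{k=1}^{q} R^{(k)} \struct^{(k)}(p^{(k)}),
\]
which follows immediately from the partition \eqref{eq:Rpart} of $R$ and the definition of the layered structure. Beyond this identity, the entire argument is a bookkeeping exercise in aligning the partitions of $R$, of $p$ and of $\wmat$.

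For item~1 I would vectorise the identity above and read off the $k$-th horizontal block of $G(R)$ directly from definition \eqref{eq:lin_matr2}, using that $\rchol{\wmat} = \blkdiag(\rchol{\wmat^{(1)}}, \ldots, \rchol{\wmat^{(q)}})$ inherits the same partition. Item~2 is then immediate: multiplying the horizontal concatenation by its transpose turns $G(R) G^{\top}(R)$ into the sum $\sum_{k=1}^{q} G^{(k)}(R^{(k)}) G^{(k)\top}(R^{(k)})$, which equals $\sum_{k=1}^{q} \Gamma^{(k)}(R^{(k)})$ by \eqref{eq:GammaG}. Item~4 is obtained similarly: transposing item~1 gives
\[
G^{\top}(R) = \vcol\bigl(G^{(1)\top}(R^{(1)}), \ldots, G^{(q)\top}(R^{(q)})\bigr),
\]
and substituting into the closed form \eqref{eq:ker_slra_cos2t} produces the claimed concatenation.

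Item~3 is the one I expect to cost a little more care, and it is the main obstacle. I would start from the observation that, because $\mvec$ stacks columns, the $(i,j)$-th $m \times m$ block of $\cmat$ can be written as
\[
\mblk{\cmat}{i}{j} = (\maffine{\struct})_{(i-1)m+1:im,\,:} \, \wmat^{-1} \, (\maffine{\struct})^{\top}_{(j-1)m+1:jm,\,:}.
\]
For a layered $\struct$, each such block of rows of $\maffine{\struct}$ is itself block-diagonal, with row partition $m = m_1 + \cdots + m_q$ (one block per layer $\struct^{(k)}$) and column partition $\np = \np^{(1)} + \cdots + \np^{(q)}$ (one block per parameter sub-vector $p^{(k)}$), the $k$-th diagonal block being exactly the $m_k$ rows of $\maffine{\struct^{(k)}}$ that describe the $i$-th column of $\struct^{(k)}(p^{(k)})$. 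Combining this row-wise block-diagonality with the column-wise block-diagonality of $\wmat^{-1}$ and applying the product rule for $\blkdiag$ yields the claimed form of $\mblk{\cmat}{i}{j}$ with diagonal entries $\mblk{\cmat^{(k)}}{i}{j}$. The delicate point is juggling the two distinct partitions simultaneously (one on the row index of columns of $\struct$, the other on the parameter index) and checking their compatibility with the block-diagonal weight; once that alignment is spelled out, the conclusion is mechanical.
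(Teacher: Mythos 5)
Your proposal is correct and follows essentially the same route as the paper: the key identity $R\,\struct(p)=\sum_{k}R^{(k)}\struct^{(k)}(p^{(k)})$ is vectorised to obtain the horizontal block partition of $G(R)$ in item~1, and items~2--4 then drop out of the definitions \eqref{eq:GammaG}, \eqref{eq:cmat_struct} and \eqref{eq:ker_slra_cos2t}. Your more explicit treatment of item~3 (the block-diagonal alignment of the rows of $\maffine{\struct}$ with the partition of $\wmat^{-1}$) is sound and merely spells out what the paper leaves as "follows from \eqref{eq:cmat_struct}".
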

\begin{proof}
It is sufficient to prove statement~\ref{it:g_layered}, which follows from
\[
\begin{split}
\mvec(R \structdelt (\sdelt{p})) &= 
\sum\limits_{k=1}^q \mvec\big(R^{(k)} \structdelt^{(k)} (\sdelt{p}^{(k)})\big)  \\
&=\sum\limits_{k=1}^q \left(\mvec \big(R^{(k)} \struct^{(k)}(\sdat{p}^{(k)})\big) - G^{(k)} (R^{(k)}) \sdelt{p}^{(k)}\right) \\
&= 
\mvec \big(R \struct(\sdat{p})\big) - \bmx G^{(1)}(R^{(1)}) & \cdots &  G^{(q)}(R^{(q)})\emx \sdelt{p}.
\end{split}
\]
The other statements follow from \eqref{eq:GammaG} and \eqref{eq:cmat_struct}.
%\label{eq:gamma_block}
\end{proof}
Next we examine the effect of left multiplication of the structure by a matrix of full row rank.
\begin{lemma}[Multiplication by $\Phi$]\label{lem:mulphi}
Let $\struct$ be defined as
\[
\struct(p) = \Phi \struct'(p),
\] 
where $\struct': \bbR^{\np} \to \bbR^{m' \times n}$ and $\Phi \in \bbR^{m \times m'}$ is such that $\rrank \Phi = m \le m'$. Then, 
\[
\costfun_\struct(R) = \costfun_{\struct'}(R \Phi).
\]
\end{lemma}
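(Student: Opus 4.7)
The plan is to reduce everything to the inner minimization defining $\costfun(R)$ in \eqref{eq:ker_slra_red} and observe that the two feasibility constraints coincide. Since $\struct(p) = \Phi \struct'(p)$ holds for every $p$, we have the pointwise identity
\begin{equation*}
R\,\struct(\sest{p}) = R\Phi\,\struct'(\sest{p}),
\end{equation*}
so the constraint $R\,\struct(\sest{p})=0$ in the inner problem defining $\costfun_\struct(R)$ is literally the same relation on $\sest{p}$ as the constraint $(R\Phi)\,\struct'(\sest{p})=0$ in the inner problem defining $\costfun_{\struct'}(R\Phi)$. Since both inner problems have the same data vector $\sdat{p}$, the same parameter space $\bbR^{\np}$, the same weight $\wmat$ and the same objective $\|\sdat{p}-\sest{p}\|^2_\wmat$, their feasible sets are identical and thus their optimal values are equal. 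This immediately yields $\costfun_\struct(R) = \costfun_{\struct'}(R\Phi)$.

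The one technical point worth verifying explicitly is that the assumption $\rrank R = d$, needed for $\costfun_\struct(R)$ to be defined, transfers to $R\Phi$ so that $\costfun_{\struct'}(R\Phi)$ is defined on the same side. Because $\Phi \in \bbR^{m\times m'}$ has full row rank $m$, its columns span $\bbR^m$, so $\rrank(R\Phi) = \rrank R = d$. (Equivalently, $\Phi$ has a right inverse $\Phi^{+}$ with $\Phi\Phi^{+}=\eyemat{m}$, so that $R = (R\Phi)\Phi^{+}$ and hence $\rrank R \le \rrank(R\Phi)$; the reverse inequality is obvious.) Thus both sides of the claimed identity are well-defined simultaneously.

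The step I would expect to be delicate, if the statement were strengthened to the variable-projection formulas \eqref{eq:f_unweighted}--\eqref{eq:GammaG} directly, is checking that the $G$-matrix and $\Gamma$-matrix for $(\struct, R)$ agree with those for $(\struct', R\Phi)$; this would follow from $\maffine{\struct} = (\eyemat{n}\otimes \Phi)\maffine{\struct'}$ combined with the identity $(\eyemat{n}\otimes R)(\eyemat{n}\otimes \Phi) = \eyemat{n}\otimes(R\Phi)$. However, for the statement as given it is cleaner to stay at the level of the constrained inner problem, which avoids any assumption that $\Gamma$ is invertible and makes the argument essentially one line.
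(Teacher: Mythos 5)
Your proof is correct and follows essentially the same route as the paper's: rewrite the constraint as $R\,\struct(\sest{p}) = R\Phi\,\struct'(\sest{p}) = 0$ and observe that the two inner least-norm problems coincide, noting that $R\Phi$ retains full row rank $d$ because $\Phi$ has full row rank. Your version is merely more explicit about the rank-preservation step (which the paper states with an apparent typo, writing $\rrank R\Phi = m$ where $d$ is meant).
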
 
\begin{proof}
This property is easily verified by rewriting the constraint in \eqref{eq:ker_slra_red} as
\[
R \struct(\sest{p}) = R \Phi\struct'(\sest{p}) = 0,
\]
where $\rrank R \Phi = m$.
\end{proof}

\section{Cost function and derivatives evaluation}\label{sec:algorithms}

%\subsection{Cost function evaluation}
In this section, we develop algorithms for computing the cost function \eqref{eq:ker_slra_red}, optimal correction \eqref{eq:ker_slra_cos2t} and their derivatives, for the general affine structure \ref{eq:struct_def}. The algorithms can be specialized for a specific class of structures by deriving the form of $\mblk{\cmat}{i}{j}$, as it will be done in Section~\ref{sec:mosaic} for the mosaic Hankel structure.

%and a generic algorithm to compute 
%By exploiting the structure of $\Gamma$ and $\cmat$, the steps of Algorithm~\ref{alg:cost_fun} can be performed efficiently, as it will be done in Section~\ref{sec:mosaic} for mosaic Hankel structure.
In Section~\ref{sec:cost_fun_corr}, we provide algorithms for evaluation of \eqref{eq:ker_slra_red}, computation of the optimal correction \eqref{eq:ker_slra_cos2t}, and computation of the gradient of \eqref{eq:ker_slra_red}.
In Sections~\ref{sec:nls_corr} and \ref{sec:nls_cholesky}, we consider the following approximation of the Hessian of \eqref{eq:ker_slra_red}. Let $f(R) = \|g(R)\|_2^2$, where $g(R) \in \bbR^{n_{\mathtt{s}}}$ with $n_{\mathtt{s}} \ge dm$, and $J_{g}$ be the Jacobian of $g$. Then, $2 J_{g}^{\top} J_{g}$ is an approximation of the Hessian of $f(R)$. Substituting this approximation in the Newton method is equivalent to performing Gauss-Newton iterations, and we will call it a \textit{Gauss-Newton approximation} of the Hessian. The Gauss-Newton approximation is frequently used in trust-region methods for solving nonlinear least-squares problems, \eg~in the Levenberg-Marquardt algorithm \cite{Marquardt63SJAM-algorithm}. In Section~\ref{sec:nls_corr}, we consider the Jacobian of $g(\cdot) = \sdelt{p}^{*} (\cdot)$, where $\sdelt{p}^{*} (\cdot)$ is defined in \eqref{eq:ker_slra_cos2t}. In Section~\ref{sec:nls_cholesky} we consider $g(\cdot) = \rchol{\Gamma}^{-\top} \resds (\cdot)$, where $\rchol{\Gamma}$ is the right Cholesky factor of $\Gamma$.

We will frequently use the following notation for the solution of the system $\Gamma \invgresd  = \resd{R}$:
\begin{equation}
\invgresd =  \invgresd(R) \defeq \Gamma^{-1} \resd{R},
\label{eq:invgresd}\tag{$\invgresd$}
\end{equation}

%\footnote{reference}
%, which may be viewed as trust-region method

\subsection{Cost function, correction and gradient}\label{sec:cost_fun_corr}
From \eqref{eq:invgresd} and \eqref{eq:f_unweighted}, the cost function \eqref{eq:ker_slra_red} can be represented as
\begin{equation}
\costfun(R) = \invgresd^{\top}(R) \resd{R} = \resd{R}^{\top} \invgresd(R),
\label{eq:invgresd_resd}\tag{$\invgresd^{\top} \resds$}
\end{equation}
and  can be computed by Algorithm~\ref{alg:cost_fun}.
\begin{algorithm}[Cost function evaluation] \label{alg:cost_fun}
\quad\quad\quad\quad

\begin{algorithmic}[1]
\REQUIRE $\struct$, $\wmat$, $\mblk{\cmat}{i}{j}$, $\sdat{p}$, $R$
\STATE Compute \eqref{eq:resd}. 
\STATE Compute \eqref{eq:gamma_block} using $\mblk{\cmat}{i}{j}$.
\STATE Compute \eqref{eq:invgresd}.
\STATE Compute the cost function $f(R)$ as \eqref{eq:invgresd_resd}.
\ENSURE $f(R)$
\end{algorithmic}
\end{algorithm}

The term $\sdelt{p}^{*} (R)$ in \eqref{eq:ker_slra_cos2t} can be evaluated by Algorithm~\ref{alg:correction}.

\begin{algorithm}[Correction computation] \label{alg:correction}
\quad\quad\quad\quad

\begin{algorithmic}[1]
\REQUIRE $\struct$, $\wmat$, $\mblk{\cmat}{i}{j}$, $\sdat{p}$, $R$
\STATE Perform steps 1--3 of Algorithm~\ref{alg:cost_fun}.
\STATE Set $\sdelt{p}^{*}(R) = G^{\top}(R) \invgresd$.
\ENSURE $\sdelt{p}^{*}(R)$
\end{algorithmic}
\end{algorithm}

\begin{note}
The optimal $\sest{p}$ in \eqref{eq:ker_slra_red} can be computed by combining Algorithm~\ref{alg:correction} and \eqref{eq:repar_w}.
\end{note}

%\begin{algorithm}
%\caption{Computation of $\sest{p}^{*}$.} \label{alg:correction2}
%\begin{algorithmic}[1]
%\REQUIRE $\struct$, $\wmat$, $\mblk{\cmat}{i}{j}$, $\sdat{p}$, $R$
%\STATE Perform steps 1--3 of Algorithm~\ref{alg:cost_fun}.
%\STATE Set $\sest{p}^{*}(R) = \rchol{\wmat^{-1}}^{\top} G^{\top} (R) \invgresd$.
%\ENSURE $\sest{p}^{*}(R)$
%\end{algorithmic}

%\begin{algorithm}[Computation of $\sest{p}^{*}$]\label{alg:correction2} 
%\quad\quad\quad\quad
%Input: $\struct$, $\wmat$, $\mblk{\cmat}{i}{j}$, $\sdat{p}$, $R$.
%Output: $\sest{p}^{*}(R)$.
%\begin{enumerate}
%\item Perform steps 1--3 of Algorithm~\ref{alg:cost_fun}.
%\item Multiply $\invgresd$ by $\rchol{\wmat^{-1}}^{\top} G^{\top}(R)$ on the left.
%\end{enumerate}
%\end{algorithm}

Instead of the gradient $\nabla \costfun$, for convenience, we use the \textit{matrix gradient} $\mgrad{d}{m} \costfun \in \bbR^{d \times m}$, defined as
\[
\mvec ( \mgrad{d}{m} \costfun) \defeq \nabla \costfun.
\] 
\begin{proposition}\label{prop:gradient}
Let $\invgresdmat\in \bbR^{d \times n}$  be the matrix constructed from the \eqref{eq:invgresd} as follows:
%$\invgresd = \vcol (\invgresdbl{1}, \ldots , \invgresdbl{n})$ be the partition of $\invgresd$ into $\invgresdbl{k} \in \bbR^{d}$, and
\begin{equation}
%\invgresdmat \defeq \bmx \invgresdbl{1} & \cdots & \invgresdbl{n}\emx, \quad 
\invgresd = \mvec \invgresdmat.
\label{eq:invgresdmat}\tag{$\invgresdmat$}
\end{equation}
Then the matrix gradient is given by
\begin{equation}
\mgrad{d}{m}(\costfun) = 2  \invgresdmat \struct^\top(p) - 
2 \sum\limits_{i,j=1}^{m} \invgresdbl{j} \invgresdbl{i}^\top R \mblk{\cmat}{i}{j}.
\label{eq:mgrad_f}\tag{$\mgrad{d}{m}$}
\end{equation}
\end{proposition}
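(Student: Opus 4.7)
The plan is to differentiate $f(R) = \resd{R}^\top \Gamma^{-1}(R) \resd{R}$ using standard matrix calculus, exploiting the block structure \eqref{eq:gamma_block} already derived, and then convert the resulting expression into the matrix gradient format $\text{tr}(M^\top dR)$ whose coefficient $M$ is, by definition, $\mgrad{d}{m} \costfun$.

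First I would apply the identity $d(\Gamma^{-1}) = -\Gamma^{-1}(d\Gamma)\Gamma^{-1}$ together with the product rule to obtain
\begin{equation*}
df = 2\,(ds)^\top \invgresd - \invgresd^\top (d\Gamma)\, \invgresd,
\end{equation*}
where I have used the definition $\invgresd = \Gamma^{-1} \resd{R}$ from \eqref{eq:invgresd}. This is the standard starting point for variable projection Hessian/gradient computations.

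Next I would handle the two terms separately. For the first term, I would use $\resd{R} = \mvec(R\,\struct(p))$ from \eqref{eq:resd}, so $ds = \mvec(dR \cdot \struct(p))$; then the identity $\mvec(A)^\top \mvec(B) = \trace(A^\top B)$ together with cyclic invariance gives $2(ds)^\top \invgresd = 2\,\trace((\invgresdmat\, \struct^\top(p))^\top\, dR)$, which contributes $2\,\invgresdmat\,\struct^\top(p)$ to the matrix gradient, matching the first term of \eqref{eq:mgrad_f}. For the second term, I would partition according to \eqref{eq:gamma_block}, writing
\begin{equation*}
\invgresd^\top (d\Gamma) \invgresd = \sum_{i,j=1}^{n} \invgresdbl{i}^\top d(R\mblk{\cmat}{i}{j} R^\top)\, \invgresdbl{j},
\end{equation*}
and differentiating $R \mblk{\cmat}{i}{j} R^\top$ produces the two pieces $dR\,\mblk{\cmat}{i}{j}\,R^\top$ and $R\,\mblk{\cmat}{i}{j}\,dR^\top$. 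Cyclic manipulations reduce each piece to the form $\trace(M^\top dR)$ and yield, respectively, contributions $\invgresdbl{i}\invgresdbl{j}^\top R \mblk{\cmat}{i}{j}^\top$ and $\invgresdbl{j}\invgresdbl{i}^\top R \mblk{\cmat}{i}{j}$.

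Finally I would invoke the symmetry of $\cmat$ --- which follows from its definition \eqref{eq:cmat_struct} and the symmetry of $\wmat^{-1}$ --- giving $\mblk{\cmat}{i}{j}^\top = \mblk{\cmat}{j}{i}$. Relabeling indices $i \leftrightarrow j$ in one of the two sums then collapses them to a single term $-2\sum_{i,j} \invgresdbl{j} \invgresdbl{i}^\top R \mblk{\cmat}{i}{j}$, completing the derivation. The main obstacle will be the careful bookkeeping in going from $\text{tr}(A\,dR)$ or $\text{tr}(A\,dR^\top)$ expressions to the canonical $\text{tr}(M^\top dR)$ form used to identify $\mgrad{d}{m}\costfun$, and the index relabeling argument that uses symmetry of $\cmat$ to merge the two contributions from differentiating $R\mblk{\cmat}{i}{j}R^\top$ --- everything else is a routine application of matrix calculus on top of the block decomposition \eqref{eq:gamma_block}.
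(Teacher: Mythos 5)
Your proposal is correct and follows essentially the same route as the paper: take the differential of $f$, obtaining $df = 2(ds)^\top \invgresd - \invgresd^\top (d\Gamma)\invgresd$, convert the first term via the vectorization/trace identity, expand the second term block-wise using \eqref{eq:gamma_block}, and merge the two resulting contributions using $\mblk{\cmat}{i}{j}^\top = \mblk{\cmat}{j}{i}$ and an index relabeling. The only cosmetic difference is that the paper writes the differential starting from $f = \invgresd^\top \resds$ rather than from $f = \resds^\top\Gamma^{-1}\resds$, which yields the same two-term expression.
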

\proofapp

From Proposition~\ref{prop:gradient}, the gradient can be computed by Algorithm~\ref{alg:gradient}.
\begin{algorithm}[Gradient evaluation]\label{alg:gradient} 
\quad\quad\quad\quad

\begin{algorithmic}[1]
\REQUIRE $\struct$, $\wmat$, $\mblk{\cmat}{i}{j}$, $\sdat{p}$, $R$
\STATE Perform steps 1--3 of Algorithm~\ref{alg:cost_fun}.
\STATE Compute the first term $2\invgresdmat \struct^\top(\sdat{p})$ of the gradient \eqref{eq:mgrad_f}.
\STATE Compute the second term of the gradient \eqref{eq:mgrad_f}.
\ENSURE $\mgrad{d}{m} \costfun(R)$
\end{algorithmic}
\end{algorithm}

\subsection{Approximation of the Hessian: Jacobian of $\sdelt{p}^{*}$}\label{sec:nls_corr}
The following proposition gives an expression to compute the Jacobian of \eqref{eq:ker_slra_cos2t}.% from \eqref{eq:ker_slra_cos2t}.
\begin{proposition}\label{prop:jacobian}\quad\quad\quad\quad\quad\quad\quad\quad
\begin{itemize}
\item
The Jacobian of \eqref{eq:ker_slra_cos2t} is given by
\begin{equation}
\frac{\partial \sdelt{p}^{*}}{\partial R_{ij}} =
 G^{\top} (R) \Gamma^{-1} z_{ij} + \frac{\partial G^{\top}}{\partial R_{ij}}  \invgresd,
\label{eq:jacobian}\tag{${\partial \sdelt{p}^{*}}/{R_{ij}}$}
\end{equation}
where
\begin{equation}
z_{ij} \defeq{\frac{\partial \resds}{\partial R_{ij}} -
 \frac{\partial \Gamma}{\partial R_{ij}} \invgresd}.
\label{eq:zij}\tag{$z_{ij}$} 
\end{equation}

\item
The vector $z_{ij}$ can be expressed as 
\[
z_{ij} =  (\struct^{\top}(\sdat{p}))_{:,j}\otimes \ortvec{i} -
\left((z^{(1)}_j) \otimes \ortvec{i} + z^{(2)}_{ij}\right),
\]
where
\begin{align}
z^{(1)}_j &\defeq 
 \sum_{k=1}^{n} \vcol \left( (\ortvec{j})^{\top} \mblk{\cmat}{1}{k} R^{\top} \invgresdbl{k},
\ldots, 
(\ortvec{j})^{\top} \mblk{\cmat}{n}{k} R^{\top} \invgresdbl{k} \right),
\label{eq:zij1}\tag{$z^{(1)}_j$} \\
z^{(2)}_{ij} &\defeq \sum_{k=1}^{n} \invgresdmat_{i,k}
\vcol \left( R  \mblk{\cmat}{1}{k} \ortvec{j}, \ldots,  R  \mblk{\cmat}{n}{k} \ortvec{j}  \right).
\label{eq:zij2}\tag{$z^{(2)}_{ij}$}
\end{align}
\end{itemize}
\end{proposition}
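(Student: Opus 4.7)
My plan is to handle the two bullets separately. For the first bullet I will simply apply the product/chain rule to the closed-form expression $\Delta p^{*}(R) = G^\top(R)\,\Gamma^{-1}(R)\,\resds(R)$. Using the standard identity $\partial \Gamma^{-1}/\partial R_{ij} = -\Gamma^{-1}(\partial \Gamma/\partial R_{ij})\Gamma^{-1}$ and the notation $\invgresd = \Gamma^{-1}\resds$, I would get three summands
\[
\frac{\partial G^\top}{\partial R_{ij}}\invgresd \;-\; G^\top \Gamma^{-1}\frac{\partial \Gamma}{\partial R_{ij}}\invgresd \;+\; G^\top\Gamma^{-1}\frac{\partial \resds}{\partial R_{ij}},
\]
and factoring $G^\top\Gamma^{-1}$ out of the last two pieces gives the claimed form \eqref{eq:jacobian} with $z_{ij}$ defined by \eqref{eq:zij}.

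For the second bullet the task is to compute $z_{ij}$ explicitly in terms of the block data $\mblk{\cmat}{i}{j}$. I would handle the two summands of $z_{ij}$ in turn. From $\resds(R)=\mvec(R\struct(\sdat{p}))$ and the identity $\mvec(uv^\top)=v\otimes u$, the derivative satisfies
\[
\frac{\partial \resds}{\partial R_{ij}} = \mvec\bigl(\ortvec{i}(\ortvec{j})^\top \struct(\sdat{p})\bigr) = (\struct^\top(\sdat{p}))_{:,j}\otimes \ortvec{i},
\]
which reproduces the first term. For $(\partial \Gamma/\partial R_{ij})\invgresd$ I will use \eqref{eq:gamma_block} blockwise: the $(l,k)$-block of $\partial\Gamma/\partial R_{ij}$ equals $\ortvec{i}(\ortvec{j})^\top \mblk{\cmat}{l}{k} R^\top + R\mblk{\cmat}{l}{k}\ortvec{j}(\ortvec{i})^\top$. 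Writing $\invgresd=\mvec \invgresdmat$ and multiplying block-by-$Y_{:,k}$, the first piece contributes to block $l$ the vector $\ortvec{i}\sum_{k}(\ortvec{j})^\top \mblk{\cmat}{l}{k}R^\top \invgresdbl{k}$, which stacked over $l$ is precisely $z^{(1)}_j\otimes \ortvec{i}$ with $z^{(1)}_j$ given by \eqref{eq:zij1}. The second piece contributes $\sum_k \invgresdmat_{i,k} R\mblk{\cmat}{l}{k}\ortvec{j}$ to block $l$, yielding exactly $z^{(2)}_{ij}$ from \eqref{eq:zij2}. Combining the two gives the claimed decomposition.

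I do not expect any serious obstacle — everything is linear algebra, and once one unfolds $\Gamma$ via \eqref{eq:gamma_block} and $\resds$ via \eqref{eq:resd} the computation is mechanical. The only real care required is index bookkeeping: keeping track of which index runs over the row-blocks of $\Gamma$ (the outer index $l$) versus which runs over the column-blocks that are contracted against $\invgresdmat$ (the inner index $k$), and being consistent about when a block of $z_{ij}\in\bbR^{dn}$ is a $d$-vector versus a scalar times $\ortvec{i}$. That asymmetry between the two rank-one terms in $\partial \Gamma/\partial R_{ij}$ is exactly what produces the asymmetric split into $z^{(1)}_j\otimes \ortvec{i}$ (which depends on $j$ but not $i$, beyond the Kronecker factor) and $z^{(2)}_{ij}$ (which depends on both through $\invgresdmat_{i,k}$).
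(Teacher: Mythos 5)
Your proposal is correct and follows essentially the same route as the paper's proof: the first bullet is the standard product rule together with $\partial \Gamma^{-1}/\partial R_{ij}=-\Gamma^{-1}(\partial\Gamma/\partial R_{ij})\Gamma^{-1}$ (which the paper only states as ``obtained by differentiation''), and the second bullet is the same blockwise expansion of $(\partial\Gamma/\partial R_{ij})\invgresd$ via \eqref{eq:gamma_block} into the two rank-one contributions that give $z^{(1)}_j\otimes\ortvec{i}$ and $z^{(2)}_{ij}$. No gaps; your index bookkeeping matches the paper's computation of the blocks $a_l$ exactly.
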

\proofapp

Proposition~\ref{prop:jacobian} leads to the following algorithm.
\begin{algorithm}[Jacobian evaluation]\label{alg:jacobian}
\quad\quad\quad\quad

\begin{algorithmic}[1]
\REQUIRE $\struct$, $\wmat$, $\mblk{\cmat}{i}{j}$, $\sdat{p}$, $R$
\STATE Perform steps 1--3 of Algorithm~\ref{alg:cost_fun}.
\FOR{$j \in 1:m$}
\STATE Compute \eqref{eq:zij1}.
\FOR{$i \in 1:d$}
\STATE Compute \eqref{eq:zij2} and \eqref{eq:zij}.
\STATE Set $x \leftarrow G^{\top} \Gamma^{-1} z_{ij}$.
\STATE Set  $\frac{\partial \sdelt{p}^{*}}{\partial R_{ij}} \leftarrow x + \frac{\partial G^{\top}}{\partial R_{ij}}  \invgresd$.
%\eqref{eq:pseudojacobian}/\eqref{eq:jacobian}
\ENDFOR
\ENDFOR
\ENSURE Jacobian of $\sdelt{p}^{*}(R)$
\end{algorithmic}
\end{algorithm}

\subsection{Using Cholesky factorization}\label{sec:nls_cholesky}
In this section, we show that the cost function and an approximation of the Hessian can be computed using the Cholesky factorization of $\Gamma$
\[
\Gamma = \rchol{\Gamma}^\top \rchol{\Gamma}.
\]
The Cholesky factorization yields a numerically reliable way \cite[Ch. 4]{Golub.VanLoan96-Matrix} to solve the system of equations $\Gamma u = v$, by using the following identity
\[
\Gamma^{-1} = \rchol{\Gamma}^{-1} \rchol{\Gamma}^{-\top}.
\] 

\begin{algorithm}[Solve system $\Gamma u = v$]\label{alg:solve_cholesky} 
\quad\quad\quad\quad

\begin{algorithmic}[1]
\REQUIRE $\Gamma$, $v$
\STATE Compute the Cholesky factor $\rchol{\Gamma}$  of $\Gamma$.
\STATE Solve $\rchol{\Gamma}^{\top}  g= v$ and $\rchol{\Gamma}  u = g$ by backward substitution.
\ENSURE $u$
\end{algorithmic}
\end{algorithm}

Moreover, the cost function can be represented as
\begin{equation}
\costfun(R) = \|\rchol{\Gamma(R)}^{-\top} \resd{R}\|^{2}_2,
\label{eq:cost_lschol}\tag{$\|\rchol{\Gamma}^{-\top} \resds\|_2^2$}
\end{equation}
which leads to a more efficient algorithm for the cost function evaluation than Algorithm~\ref{alg:cost_fun}.

\begin{algorithm}[Cost function evaluation using Cholesky factorization]\label{alg:cost_fun_cholesky} 
\quad\quad\quad\quad

\begin{algorithmic}[1]
\REQUIRE $\struct$, $\wmat$, $\mblk{\cmat}{i}{j}$, $\sdat{p}$, $R$
\STATE Compute \eqref{eq:resd}.
\STATE Compute \eqref{eq:gamma_block} using $\mblk{\cmat}{i}{j}$.
%\STATE Perform steps 1--2 of Algorithm~\ref{alg:cost_fun}.
\STATE Compute the Cholesky factor $\rchol{\Gamma}$.
\STATE Solve $\rchol{\Gamma}^{\top}  \lschol(R) = \resd{R}$.
\STATE Compute $f(R) = \|\lschol(R)\|_2^2$.
\ENSURE $f(R)$
\end{algorithmic}
\end{algorithm}
In addition, equation \eqref{eq:cost_lschol} defines a representation of the cost function as a sum of squares, which is more compact and easier to compute than \eqref{eq:f_unweighted}. Indeed, $\sdelt{p}^*(R) \in \bbR^{\np}$, but $\lschol(R) \in \bbR^{nd}$.
%$\lschol(R) = (\rchol{\Gamma}^\top_R)^{-1} \resd{R} \in \bbR^{nd}$
However, the elements of the Jacobian of $\lschol$
\[
\frac{\partial\lschol}{\partial R_{ij}}   =
\rchol{\Gamma(R)}^{-\top} \frac{\partial \resds}{\partial R_{ij}} +
\frac{\partial \rchol{\Gamma(R)}^{-\top}}{\partial R_{ij}} \resd{R},
\]
cannot be computed analytically due to the need of differentiation of $\rchol{\Gamma}^{-\top}$. For this purpose the \textit{pseudo-Jacobian}, proposed in~\cite{Guillaume.Pintelon96ITSP-Gauss}, can be used 
\begin{equation}
\frac{\pseudopartial\lschol}{\partial R_{ij}} \defeq
\left(\rchol{\Gamma(R)}^{-\top}  \frac{\partial \resd{R}}{\partial R_{ij}} -
\frac{1}{2} \rchol{\Gamma(R)}^{-\top} \frac{\partial \Gamma}{\partial R_{ij}} \Gamma^{-1} \resd{R} \right).
\label{eq:pseudojacobian}\tag{${\pseudopartial\lschol}/{\partial R_{ij}}$}
\end{equation}
\begin{note}
In~\cite{Guillaume.Pintelon96ITSP-Gauss} it was shown that for functions of the form \eqref{eq:f_unweighted}, the pseudo-Jacobian $J^{(\mathfrak{p})}_g$ yields the same stationary points of $\costfun(R)$ (i.e., $\nabla f = 2 (J^{(\mathfrak{p})}_g)^{\top} g$) and provides an approximation of the Hessian of $\costfun(R)$.
\end{note}

It is easy to see that \eqref{eq:pseudojacobian} and \eqref{eq:zij} differ only by a constant factor in one summand. Therefore,
\begin{equation}
\begin{split}
&\frac{\pseudopartial\lschol}{\partial R_{ij}} = \rchol{\Gamma(R)}^{-\top} \pseudoz_{ij}, \\
&\pseudoz_{ij} \defeq  (\struct^{\top}(\sdat{p}))_{:,j} \otimes \ortvec{i} -
\frac{1}{2}\left((z^{(1)}_j) \otimes \ortvec{i} + z^{(2)}_{ij}\right).
\end{split}
\label{eq:pseudozij}\tag{$\pseudoz_{ij}$}
\end{equation}
%It is easy to see that the right factor of \eqref{eq:pseudojacobian} has the same structure as~$z_{ij}$ in Proposition~\ref{prop:jacobian}.
The resulting algorithm is Algorithm~\ref{alg:pseudojacobian}.

\begin{algorithm}[Pseudo-Jacobian evaluation]\label{alg:pseudojacobian}
\quad\quad\quad\quad

\begin{algorithmic}[1]
\REQUIRE $\struct$, $\wmat$, $\mblk{\cmat}{i}{j}$, $\sdat{p}$, $R$
\STATE Compute steps 1--2 of Algorithm~\ref{alg:gradient} using  Algorithm~\ref{alg:solve_cholesky}.
\FOR{$j \in 1:m$}
\STATE Compute \eqref{eq:zij1}.
\FOR{$i \in 1:d$}
\STATE Compute \eqref{eq:zij2} and \eqref{eq:pseudozij}.
\STATE Solve $\rchol{\Gamma(R)}^{\top} {\pseudopartial\lschol}/{\partial R_{ij}} = \pseudoz_{ij}$.
%Multiply \eqref{eq:pseudozij} on the left by $()^{-1}$, using the precomputed Cholesky factor.
%\eqref{eq:pseudojacobian}/\eqref{eq:jacobian}
\ENDFOR
\ENDFOR
\ENSURE The pseudo-Jacobian $J^{(\mathfrak{p})}_{g_s}$
\end{algorithmic}
\end{algorithm}

\section{Weighted mosaic Hankel structured low-rank approximation problem}\label{sec:mosaic}
In this section, we establish the form of $\Gamma(R)$, $\mblk{\cmat}{i}{j}$ and $\costfun(R)$ for the structure \eqref{eq:mosaic_like} and weight matrix 
\begin{equation}
\wmat = \blkdiag(\wmat^{(1,1)},\ldots,\wmat^{(q,N)}).
\label{eq:wblockw}\tag{$\mathrm{blkdiag}\,\wmat$}
\end{equation}
 In view of Lemmae~\ref{lem:striped}--\ref{lem:mulphi}, we can consider only the scalar Hankel structure.

%, \ie, $N = 1$, $\bfn = \bmx n \emx$:

\subsection{Scalar Hankel matrices}\label{sec:scal_hank}

\begin{lemma}\label{lem:Shank}
For the scalar Hankel structure, the matrix \eqref{eq:lin_matr} %the matrix $\maffine{{\HH_{m,n}}}$
 has the following form
\begin{equation}
\maffine{{\HH_{m,n}}} \defeq
\bmx
\bmx \eyemat{m}  & 0_{m\times (\np-m)} \emx \shiftmat{\np}^{0} \\
\bmx \eyemat{m}  & 0_{m\times (\np-m)} \emx \shiftmat{\np}^{1} \\
\vdots \\
\bmx \eyemat{m}  & 0_{m\times (\np-m)} \emx \shiftmat{\np}^{n-1}
\emx.
\label{eq:maffine_hank}\tag{$\maffine{{\HH_{m,n}}}$}
\end{equation}
\end{lemma}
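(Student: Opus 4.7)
The plan is to verify the identity $\mvec \HH_{m,n}(p) = \maffine{\HH_{m,n}}\, p$ directly, which, combined with the fact that $\HH_{m,n}$ is linear (i.e.\ $S_0 = 0$ in \eqref{eq:struct_def}), establishes the claim by the definition \eqref{eq:lin_matr} of $\maffine{\struct}$.

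First, I would compute the column-wise vectorization $\mvec \HH_{m,n}(p)$ directly from the definition of the Hankel structure. The $j$-th column of $\HH_{m,n}(p)$ is $\vcol(p_j, p_{j+1}, \ldots, p_{j+m-1})$, so $\mvec \HH_{m,n}(p)$ is the concatenation of these $n$ blocks, each of size $m$, indexed by $j = 1, \ldots, n$. Equivalently, the $((j-1)m + i)$-th entry of $\mvec \HH_{m,n}(p)$ equals $p_{i + j - 1}$.

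Second, I would interpret the blocks on the right-hand side of \eqref{eq:maffine_hank}. The shift matrix $\shiftmat{\np}$ acts on a vector by $(\shiftmat{\np}\, v)_i = v_{i+1}$ (with zero padding at the end), so by induction $(\shiftmat{\np}^{k}\, p)_i = p_{i+k}$ for $i + k \le \np$. Left-multiplication by $[\eyemat{m}\; 0_{m \times (\np - m)}]$ extracts the first $m$ entries, giving the vector $\vcol(p_{1+k}, p_{2+k}, \ldots, p_{m+k})$; note that $m + k \le m + (n-1) = \np$ for $k \in 0\!:\!n-1$, so no zero-padding occurs.

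Third, setting $k = j - 1$ for $j \in 1\!:\!n$ shows that the $j$-th block of the stacked matrix in \eqref{eq:maffine_hank}, applied to $p$, yields exactly the $j$-th column of $\HH_{m,n}(p)$. Stacking for $j = 1, \ldots, n$ then reproduces $\mvec \HH_{m,n}(p)$, which proves the formula. The argument is entirely elementary; the only mild subtlety is consistently indexing blocks against powers of $\shiftmat{\np}$ (starting from exponent $0$), and there is no real obstacle.
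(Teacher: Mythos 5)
Your proposal is correct and follows essentially the same route as the paper's own (very brief) proof: the paper simply records the identity $\bmx \eyemat{m} & 0\emx \shiftmat{\np}^{k}\, p = p_{k+1:k+m}$ and concludes that \eqref{eq:struct_vec} holds with $S_0 = 0_{m\times n}$, which is exactly the block-by-block matching you carry out in more detail. Your explicit check that $m+k \le \np$ for $k \in 0\!:\!n-1$ (so the shift never truncates) is a worthwhile addition but not a different argument.
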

\begin{proof}
Indeed, for a vector $p = \bmx p_1 & \ldots & p_{\np} \emx^{\top}$, we have
\begin{equation}
\bmx \eyemat{m}  & 0_{(\np-m)\times  m} \emx \shiftmat{\np}^{k} p = p_{\intrng{k+1}{k+m}}.
\label{eq:sel_subv}\tag{$[\eyemat{} \  0] \shiftmat{}^{k}$}
\end{equation}
Therefore, \eqref{eq:struct_vec} holds with \eqref{eq:maffine_hank} and $S_0 = 0_{m\times n}$. 
\end{proof}

\begin{corollary}
The matrix $G_{\HH_{m,n}}$ for $\wmat = \eyemat{m+n-1}$ and $R\in \bbR^{m \times d}$  has the form
\begin{equation}
G_{\HH_{m,n}} (R) =
\bmx
R_{:,1}    & R_{:,2}    & \ldots & R_{:,m}    &     &     & \\
       & R_{:,1}    & R_{:,2}    & \ldots & R_{:,m} &     & \\
       &        & \ddots &        &     &  \ddots  &    \\
       &        &        & R_{:,1}    & R_{:,2} & \ldots   & R_{:,m} 
\emx \in \bbR^{nd \times (m+n-1)}.
\label{G_h}\tag{$G_{\HH_{m,n}}$}
\end{equation}
\end{corollary}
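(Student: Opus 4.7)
The plan is to apply the general definition of $G(R)$ from \eqref{eq:lin_matr2} directly, substitute the explicit form of $\maffine{\HH_{m,n}}$ supplied by Lemma~\ref{lem:Shank}, and then read off the block structure. Since $\wmat = \eyemat{m+n-1}$, we have $\rchol{\wmat}^{-1} = \eyemat{m+n-1}$, so \eqref{eq:lin_matr2} reduces to $G_{\HH_{m,n}}(R) = (\eyemat{n} \otimes R) \maffine{\HH_{m,n}}$. Noting that the statement uses $R_{:,j}$ for a column of height $d$, I will treat $R$ as a $d \times m$ matrix (consistent with the convention used throughout the paper) and interpret the claim accordingly.

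Next I would exploit the block-row structure of $\maffine{\HH_{m,n}}$ from \eqref{eq:maffine_hank}. The Kronecker product $(\eyemat{n} \otimes R)$ acts block-diagonally, so it just left-multiplies each of the $n$ block rows of $\maffine{\HH_{m,n}}$ by $R$. The $(k{+}1)$-th block row of $G_{\HH_{m,n}}(R)$ is therefore
\[
R \bmx \eyemat{m} & 0_{m \times (\np - m)} \emx \shiftmat{\np}^{k}, \qquad k = 0, \ldots, n-1,
\]
where $\np = m+n-1$.

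The key identity to verify is then that $R \bmx \eyemat{m} & 0 \emx \shiftmat{\np}^{k}$ is the $d \times \np$ matrix whose columns $k{+}1, \ldots, k{+}m$ equal $R_{:,1}, \ldots, R_{:,m}$ respectively, and whose other columns vanish. This follows from \eqref{eq:sel_subv}: the operator $\bmx \eyemat{m} & 0 \emx \shiftmat{\np}^{k}$ selects the entries $p_{k+1:k+m}$ from any input vector $p$, so its matrix representation has a single $1$ in row $i$, column $i+k$ for $i = 1, \ldots, m$, and zeros elsewhere. Left-multiplying by $R$ replaces these unit entries by the columns $R_{:,i}$, yielding the claimed sparsity pattern. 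Stacking these $n$ block rows vertically produces the banded pattern displayed in \eqref{G_h}, where each row block of height $d$ is obtained from the previous one by shifting all the $R_{:,j}$ blocks one column to the right.

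The argument is essentially bookkeeping; there is no real obstacle beyond keeping the indexing consistent and recognizing that left-multiplication by the Kronecker factor $(\eyemat{n} \otimes R)$ acts independently on each of the $n$ block rows of $\maffine{\HH_{m,n}}$.
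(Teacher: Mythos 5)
Your proof is correct and follows exactly the route the paper intends: the corollary is stated as an immediate consequence of Lemma~\ref{lem:Shank} via the definition \eqref{eq:lin_matr2}, which is precisely the substitution-and-bookkeeping argument you carry out, including the correct use of \eqref{eq:sel_subv} to read off the banded sparsity pattern. Your observation that the statement's ``$R \in \bbR^{m \times d}$'' should be read as $R \in \bbR^{d \times m}$ (a typo in the paper, given that $R_{:,j}$ must have height $d$ and $G$ is $nd \times (m+n-1)$) is also right.
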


We next consider the case of Hankel low-rank approximation with weighted $2$-norm.
\begin{proposition}
For the scalar Hankel structure and weight matrix $\wmat$, the blocks of \eqref{eq:cmat_struct} are equal to 
\[
\mblk{\cmat}{i}{j} =(\wmat^{-1})_{i:i+m-1,j:j+m-1}.
\]
\end{proposition}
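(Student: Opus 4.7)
The plan is to compute $\cmat = \maffine{\HH_{m,n}} \wmat^{-1} \maffine{\HH_{m,n}}^\top$ block by block, using the closed form of $\maffine{\HH_{m,n}}$ from Lemma~\ref{lem:Shank}.

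First I would recall from \eqref{eq:maffine_hank} that $\maffine{\HH_{m,n}}$ is naturally partitioned into $n$ block rows of size $m \times \np$, whose $i$-th block row equals $[\eyemat{m} \; 0_{m \times (\np-m)}]\shiftmat{\np}^{i-1}$. Substituting this into the definition \eqref{eq:cmat_struct} and using the compatible block partitioning of $\cmat$ from Section~3.2, the $(i,j)$-block is
\begin{equation*}
\mblk{\cmat}{i}{j} = [\eyemat{m} \; 0] \shiftmat{\np}^{i-1} \, \wmat^{-1} \, (\shiftmat{\np}^{j-1})^\top [\eyemat{m} \; 0]^\top.
\end{equation*}

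The second step is to interpret the left and right factors as row/column selectors. By identity \eqref{eq:sel_subv}, applied column-wise, left multiplication by $[\eyemat{m}\; 0]\shiftmat{\np}^{i-1}$ extracts rows $i, i+1, \ldots, i+m-1$ of the matrix to its right. By symmetry (taking transposes), right multiplication by $(\shiftmat{\np}^{j-1})^\top [\eyemat{m}\; 0]^\top$ extracts columns $j, j+1, \ldots, j+m-1$ of the matrix to its left. Combining the two selections gives exactly $(\wmat^{-1})_{i:i+m-1,\, j:j+m-1}$, which is the claim.

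I do not expect a real obstacle: Lemma~\ref{lem:Shank} has already done the work of writing $\maffine{\HH_{m,n}}$ as a stack of row-selection operators, and the remainder is a direct block-matrix multiplication. The only mild subtlety is being careful that here $\wmat^{-1}$ means an actual inverse (so the identity $\rchol{\wmat}^{-1}\rchol{\wmat}^{-\top} = \wmat^{-1}$ used implicitly in \eqref{eq:cmat_struct} holds unambiguously), or, in the diagonal case with possibly infinite weights, the entrywise convention for $\wmat^{-1}$ introduced in the notation table; in both cases the block extraction identity above is unchanged.
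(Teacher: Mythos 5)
Your proposal is correct and follows essentially the same route as the paper: both substitute the block-row form of $\maffine{\HH_{m,n}}$ from Lemma~\ref{lem:Shank} into \eqref{eq:cmat_struct} to get the expression \eqref{eq:v_w_hank_ij}, and then use \eqref{eq:sel_subv} to read off the left and right factors as row/column selectors extracting the submatrix $(\wmat^{-1})_{i:i+m-1,\,j:j+m-1}$. Your remark about the meaning of $\wmat^{-1}$ in the diagonal/infinite-weight case is a reasonable extra precaution, consistent with the paper's notational conventions.
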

\begin{proof}
From \eqref{eq:cmat_struct} and \eqref{eq:maffine_hank}, we have that 
\begin{equation}
\mblk{\cmat}{i}{j} = 
\bmx \eyemat{m}  & 0_{m\times (\np-m)} \emx \shiftmat{\np}^{i-1}
\wmat^{-1}
(\shiftmat{\np}^{\top})^{j-1}  
\bmx \eyemat{m}  \\ 0_{(\np-m)\times m} \emx.
\label{eq:v_w_hank_ij}\tag{$\mblk{\cmat}{i}{j}(\HH_{m,n})$}
\end{equation}
From \eqref{eq:sel_subv} one can verify that \eqref{eq:v_w_hank_ij} corresponds an $m \times m$ submatrix of~$\wmat^{-1}$.
\end{proof}

We will call a symmetric matrix \textit{$b$-banded ($b$-block-banded)} if all upper diagonals (resp. upper block diagonals) starting from $b$-th are zero. (By convention, the main (block) diagonal is the $0$-th upper (block) diagonal.) 

\begin{corollary}\label{cor:hank_cmat}
\begin{enumerate}
\item If $\wmat^{-1}$ is $b$-banded then $\cmat$ and $\Gamma$ are $(m+b-1)$-block-banded.

\item If $\wmat^{-1}$ is Toeplitz then $\cmat$ and $\Gamma$ are block-Toeplitz. 

\item For the case \eqref{eq:wtoW} $\cmat$ and $\Gamma$ are $m$-block-banded. In particular,

\begin{enumerate}
\item For $\wmat = \eyemat{\np}$, $\mblk{\cmat}{i}{j} = \cmat_{j-i}$, where
$\cmat_{k} \defeq (\shiftmat{m}^{\top})^{k}$ for $k \ge 0$, and $\cmat_{-k} = \cmat_k^{\top}$. (In this case, $\Gamma$ and $\cmat$ are  block-Toeplitz.)
%\[
%\cmat_{k} \defeq
%\begin{cases} 
%(\shiftmat{m}^{\top})^{k},\quad& \text{for}\;k \ge 0, \\
%(\shiftmat{m})^{-k}, \quad& \text{for}\;k < 0.
%\end{cases}
%\]

\item For $\wmat = \diag(w_1, \ldots, w_{\np})$ (element-wise weights and fixed values),
\[
\mblk{\cmat}{i}{j} = 
\begin{cases} 
\diag(\gamma_{\intrng{i}{i+m-1}})
(\shiftmat{m}^{\top})^{j-i}, \quad& \text{for}\;j \ge i, \\
(\mblk{\cmat}{j}{i})^{\top}, \quad& \text{for}\;j < i,
\end{cases}
\]
where $\gamma_i \defeq w_i^{-1}$.
\end{enumerate}

\end{enumerate}
\end{corollary}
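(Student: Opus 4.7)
The plan is to leverage the preceding proposition, which identifies each block $\mblk{\cmat}{i}{j}$ with the submatrix $(\wmat^{-1})_{i:i+m-1,\,j:j+m-1}$. This reduces every claim of the corollary to a simple bookkeeping exercise on the nonzero pattern of $\wmat^{-1}$. Moreover, by \eqref{eq:gamma_block} we have $\mblk{\Gamma}{i}{j}=R\,\mblk{\cmat}{i}{j}\,R^{\top}$, so any block-level sparsity or shift-invariance pattern of $\cmat$ is automatically inherited by $\Gamma$; hence it will suffice to verify each property for $\cmat$ alone.

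For part~1 I would argue as follows. Saying that $\wmat^{-1}$ is $b$-banded means $(\wmat^{-1})_{k,\ell}=0$ whenever $\ell-k\ge b$. In the submatrix indexed by rows $i,\dots,i+m-1$ and columns $j,\dots,j+m-1$, the smallest value of $\ell-k$ attained is $j-(i+m-1)$. Thus the entire block vanishes as soon as $j-i\ge m+b-1$, which is precisely the claimed $(m+b-1)$-block-bandedness.

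Part~2 is then essentially immediate: if $(\wmat^{-1})_{k,\ell}$ depends only on the shift $\ell-k$, then every entry of the submatrix depends only on $j-i$, so the block decomposition of $\cmat$ is block-Toeplitz. Part~3 follows by specialization, since a diagonal matrix is $1$-banded and hence yields an $m$-block-banded $\cmat$ via part~1. For the two sub-cases, I would simply read off which entries of the diagonal $\wmat^{-1}$ land in the submatrix: when $j\ge i$, the nonzero positions $(a,b)$ of the submatrix satisfy $a-b=j-i$, which is exactly the support pattern of $(\shiftmat{m}^{\top})^{j-i}$; the values along this subdiagonal are $1$ in case~(a) and $\gamma_i,\dots,\gamma_{i+m-1}$ (with the first $j-i$ of them harmlessly annihilated by the shift) in case~(b). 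The $j<i$ formula then follows from the symmetry $\mblk{\cmat}{i}{j}=(\mblk{\cmat}{j}{i})^{\top}$, which is inherited from symmetry of $\wmat^{-1}$.

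The main obstacle I anticipate is not conceptual but purely combinatorial: converting the scalar band parameter $b$ into the block-band parameter $m+b-1$, and matching the support of the shifted identity $(\shiftmat{m}^{\top})^{j-i}$ with the correct diagonal entries of $\wmat^{-1}$ in case~(b). These are routine index manipulations, so no step should pose a genuine difficulty beyond careful accounting for off-by-one shifts.
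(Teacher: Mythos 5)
Your proposal is correct and follows exactly the route the paper intends: the corollary is stated as an immediate consequence of the preceding proposition identifying $\mblk{\cmat}{i}{j}$ with $(\wmat^{-1})_{i:i+m-1,\,j:j+m-1}$, together with $\mblk{\Gamma}{i}{j}=R\,\mblk{\cmat}{i}{j}\,R^{\top}$, and the paper leaves the index bookkeeping implicit. Your explicit verification of the band threshold $j-i\ge m+b-1$, the shift-invariance argument for the Toeplitz case, and the matching of $\diag(\gamma_{\intrng{i}{i+m-1}})(\shiftmat{m}^{\top})^{j-i}$ with the relevant diagonal entries is exactly that omitted computation, carried out correctly.
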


\subsection{The main theorem}
By Lemma~\ref{lem:striped}, we can consider only the layered Hankel structure
\begin{equation}
\HH_{\bfm, n } (p) = \bmx
\HH_{m_1,n}(p^{(1)})\\
\vdots \\
\HH_{m_q,n}(p^{(q)})
\emx,\quad \mbox{where } \bfm = \bmx m_1 & \cdots & m_q \emx.
\label{eq:layered}\tag{$\HH_{\bfm,n}$}
\end{equation}

\begin{theorem}\label{thm:main}
For the structure \eqref{eq:layered} and $\wmat = \blkdiag(\wmat^{(1)},\ldots,\wmat^{(q)})$, with $(\wmat^{(k)})^{-1}$ being $b^{(k)}$-banded, the following statements hold true:

\begin{enumerate}
\item the matrix $\Gamma$ is $\bw$-block banded with $d \times d$ blocks, with 
$\bw\defeq \max_{k} \{m_k+ {b^{(k)}} -1\}$;

\vspace{1em}

\item if $(\wmat^{(k)})^{-1}$ are all Toeplitz, then $\Gamma$ is block-Toeplitz;

\item for the case \eqref{eq:wtoW} and $\wmat^{(k)} = \diag w^{(k)}$, $w^{(k)} \in \bbR^{\np^{(k)}}$ the matrices $\cmat_{\HH_{\bfm,n}}$ and $\Gamma_{\HH_{\bfm,n}}(R)$ are block-banded with block bandwidth 
\[
\bw \defeq \max \{ m_l \}_{l = 1}^q; 
\]
in particular,
\begin{enumerate}
\item the matrices $\mblk{\cmat}{i}{j}$ for $j \ge i$ can be expressed as 
\begin{equation}
\mblk{\cmat}{i}{j} =   
\diag\left(\vcol\big(\gamma^{(1)}_{\intrng{i}{i+m_1-1}}, \ldots,
\gamma^{(q)}_{\intrng{i}{i+m_q-1}}\big)\right) (\shiftmat{\bfm}^{\top})^{j-i}, 
\label{Gamma_ij}\tag{$\mblk{\cmat}{i}{j}(\HH_{\bfm,n})$}
\end{equation}
where $\gamma^{(k)} = (w^{(k)})^{-1}$ and $\shiftmat{\bfm} \defeq \blkdiag(\shiftmat{m_1},\ldots,\shiftmat{m_q})$;

\item\label{it:toeplitz}
if the weights are constant for the blocks of \eqref{eq:layered}, \ie 
\begin{equation}
w^{(l)} \equiv \omega_l,\quad \mbox{where}\quad \omega_l \in \bbR,
\label{eq:blockwisew}\tag{block-wise $w$}
\end{equation}
then $\cmat$ is block-Toeplitz, \ie, $\mblk{\cmat}{i}{j} = \cmat_{j-i}$, and 
\[
\mblk{\Gamma}{i}{j} (R) = \Gamma_{j-i},\quad\Gamma_k = R \cmat_{k} R^\top,
\]
\[
\cmat_k = \blkdiag(\gamma_{1} \eyemat{m_1}, \ldots,\gamma_{q} \eyemat{m_q}) (\shiftmat{\bfm}^{\top})^{k} \;\mbox{and}\;
\quad \gamma_{l} \defeq \omega_{l}^{-1}.
\]

\end{enumerate}

\end{enumerate}
\end{theorem}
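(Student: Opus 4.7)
The plan is to exploit the block-diagonal decomposition provided by Lemma~\ref{lem:layered}, which reduces the layered structure $\HH_{\bfm,n}$ to the scalar Hankel case already handled by Corollary~\ref{cor:hank_cmat}. Specifically, for the layered structure,
\[
\mblk{\cmat}{i}{j} = \blkdiag\bigl(\mblk{\cmat^{(1)}}{i}{j}, \ldots, \mblk{\cmat^{(q)}}{i}{j}\bigr),
\qquad
\Gamma(R) = \sum_{k=1}^q \Gamma^{(k)}(R^{(k)}),
\]
where $\cmat^{(k)}$ and $\Gamma^{(k)}$ correspond to the scalar Hankel structure $\HH_{m_k,n}$ with weight $\wmat^{(k)}$, and $R^{(k)}$ is the partition of $R$ from \eqref{eq:Rpart}. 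All four statements follow by pushing the corresponding scalar-Hankel facts through this decomposition.

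First I would prove (1): by the scalar Hankel case in Corollary~\ref{cor:hank_cmat}, each $\cmat^{(k)}$ (and hence $\Gamma^{(k)}$) is $(m_k + b^{(k)} - 1)$-block-banded. Block-diagonal assembly preserves the block-banded bandwidth, and summing the $\Gamma^{(k)}$'s gives a matrix whose block bandwidth is the maximum of the summand bandwidths, yielding $\bw = \max_k \{m_k + b^{(k)} - 1\}$. For (2), I would observe that each $\Gamma^{(k)}$ is block-Toeplitz (by the corresponding case of Corollary~\ref{cor:hank_cmat}), and that a sum of block-Toeplitz matrices with the same block size is block-Toeplitz. For the first part of (3), the bandwidth follows from (1) because $(\wmat^{(k)})^{-1}$ is diagonal hence $1$-banded, so $b^{(k)} = 1$ and $\bw = \max_k m_k$.

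The meatiest computation is (3a). Starting from the scalar-Hankel formula $\mblk{\cmat^{(k)}}{i}{j} = \diag(\gamma^{(k)}_{i:i+m_k-1})\,(\shiftmat{m_k}^\top)^{j-i}$ for $j \ge i$, I would assemble the blocks via $\blkdiag$ and then use two elementary identities:
\[
\blkdiag\bigl((\shiftmat{m_1}^\top)^{j-i}, \ldots, (\shiftmat{m_q}^\top)^{j-i}\bigr) = (\shiftmat{\bfm}^\top)^{j-i},
\qquad
\blkdiag\bigl(\diag\gamma^{(1)}_{\cdots},\ldots,\diag\gamma^{(q)}_{\cdots}\bigr) = \diag\bigl(\vcol(\gamma^{(1)}_{\cdots},\ldots,\gamma^{(q)}_{\cdots})\bigr),
\]
together with the fact that a product of two block-diagonal matrices with compatible blocks is block-diagonal with the block-wise products. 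This yields \eqref{Gamma_ij}. Finally, for (3b), the block-wise weights collapse $\diag(\gamma^{(k)}_{i:i+m_k-1})$ to $\gamma_k\eyemat{m_k}$, so the right-hand side of \eqref{Gamma_ij} no longer depends on $i$ individually but only on $j-i$; this makes $\cmat$ block-Toeplitz, and passing this through $\mblk{\Gamma}{i}{j}(R) = R\,\mblk{\cmat}{i}{j}\,R^\top$ transfers the block-Toeplitz structure to $\Gamma$.

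The main obstacle I anticipate is purely notational bookkeeping: aligning the block-diagonal factorization from Lemma~\ref{lem:layered} (whose blocks are of size $m_k \times m_k$ and vary with $k$) with the compact expressions that use $\shiftmat{\bfm}$ and the concatenated weight vector. Nothing deep is at stake, but each identity must be checked against consistent indexing conventions. No step requires information beyond Lemma~\ref{lem:layered} and Corollary~\ref{cor:hank_cmat}.
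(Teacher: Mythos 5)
Your proposal is correct and follows exactly the route the paper takes: its proof of Theorem~\ref{thm:main} is the one-line statement that it ``follows from Lemma~\ref{lem:layered} and Corollary~\ref{cor:hank_cmat},'' and you have simply filled in the routine block-diagonal and summation bookkeeping that the authors leave implicit. No gaps.
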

\begin{proof}
Theorem~\ref{thm:main} follows from Lemma~\ref{lem:layered} and Corollary \ref{cor:hank_cmat}.
\end{proof}

\subsection{Computational complexity of the algorithms for layered and mosaic Hankel structures}\label{sec:complexity}
In this section, we assume that $\wmat$ is given by \eqref{eq:wtoW}, and therefore $\Gamma$ matrix is $\bw$-block-banded with $\bw = \max_{k} \{m_k\}$. This case is implemented in the C++ solver of \cite{Markovsky.Usevich13JCAM-Software}.
%we use Theorem~\ref{thm:main} and the fact that the $\Gamma$ matrix is $\bw$-block-banded. We denote by $b$ the bandwidth of $\wmat^{-1}$ (\ie~$b = \max_{k} \{b^{(k)}\}$), and assume that it is small compared to $\mu$. (In this case,  the bandwidth $\mu$ of $\Gamma$ has at most the same order as $m$.)
In our complexity analysis, we count only the number of multiplications, since the number of additions is typically less than the number of multiplications. 

\begin{lemma}\label{lem:comp_banded_cholesky}
For \eqref{eq:layered}, the complexity of the steps in the Algorithm~\ref{alg:solve_cholesky} is given by $O(d^3 \bw^2 n)$ for step 1 and $O(d^2 \bw n)$ for step 2.
\end{lemma}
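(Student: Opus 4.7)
The plan is to reduce the block-banded Cholesky cost estimate to a scalar-banded one and invoke the standard complexity formulas for band Cholesky and banded triangular solves from \cite{Golub.VanLoan96-Matrix}.

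First, I would invoke Theorem~\ref{thm:main} to record the structural properties of $\Gamma$ needed here: for \eqref{eq:layered} with \eqref{eq:wtoW}, the matrix $\Gamma(R) \in \bbR^{dn \times dn}$ is symmetric positive definite (by the standing assumption after \eqref{eq:NC}) and $\bw$-block-banded with $d \times d$ blocks. Viewed as a scalar matrix, $\Gamma$ therefore has scalar half-bandwidth at most $d\bw - 1$, and total size $N = dn$.

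Second, for step~1, I would apply the standard result that band Cholesky of an $N \times N$ symmetric positive definite matrix with scalar half-bandwidth $w$ costs $O(Nw^{2})$ multiplications and yields a factor $\rchol{\Gamma}$ of the same bandwidth (no fill-in outside the band). Substituting $N = dn$ and $w = d\bw$ gives $O(d^{3}\bw^{2} n)$, as claimed. Equivalently, one may use a block-Cholesky recurrence: processing $n$ block-columns in turn, each step does one $d \times d$ Cholesky of a pivot block followed by $O(\bw)$ block operations on $d \times d$ blocks in the $\bw$ subsequent block-columns that lie within the band, costing $O(d^{3}\bw)$ per $d \times d$ block operation and hence $O(d^{3}\bw^{2})$ per block-step; summing over the $n$ block-steps reproduces the same total.

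Third, for step~2, I would invoke the standard cost of a banded triangular solve: for a triangular matrix of size $N$ with half-bandwidth $w$, forward/back substitution costs $O(Nw)$ multiplications. Two such solves (one with $\rchol{\Gamma}^{\top}$ and one with $\rchol{\Gamma}$) each cost $O(dn \cdot d\bw) = O(d^{2}\bw n)$, matching the claim.

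I do not anticipate a real obstacle here; the proof is essentially bookkeeping. The only point I would verify carefully is the no-fill-in property: that the Cholesky factor of a $\bw$-block-banded matrix with $d \times d$ blocks is itself $\bw$-block-banded. This can be established either by the classical scalar banded Cholesky result after unfolding the $d \times d$ blocks into scalar entries, or directly by induction on the block-Cholesky recurrence, where at each block-step only blocks within the existing band are touched.
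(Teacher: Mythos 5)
Your proposal is correct and follows essentially the same route as the paper: the paper's proof simply observes that $\Gamma \in \bbR^{nd\times nd}$ is $\bw d$-banded as a scalar matrix and cites the standard band-Cholesky and banded-substitution complexities from \cite[Ch.~4]{Golub.VanLoan96-Matrix}, which is exactly your first argument with $N = dn$ and $w = d\bw$. The extra block-recurrence derivation and the no-fill-in check you include are sound but not needed beyond the citation.
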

\begin{proof}
$\Gamma \in \bbR^{nd \times nd}$ is $\bw d$-banded, and the complexity of both steps is given in \cite[Ch. 4]{Golub.VanLoan96-Matrix}.
\end{proof}

\begin{theorem}\label{thm:cost_grad}
For \eqref{eq:layered}, the complexity of the cost function evaluation using Algorithm~\ref{alg:cost_fun_cholesky} and gradient evaluation using Algorithm~\ref{alg:gradient} is $O(d^3 \bw m n)$.
\end{theorem}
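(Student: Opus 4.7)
The plan is to walk through Algorithm~\ref{alg:cost_fun_cholesky} and Algorithm~\ref{alg:gradient} line by line, bounding each operation count using: (i) the explicit form of $\mblk{\cmat}{i}{j}$ for $\HH_{\bfm,n}$ given by \eqref{Gamma_ij} in Theorem~\ref{thm:main}; (ii) the fact (also from Theorem~\ref{thm:main}) that $\Gamma$ is $\bw$-block-banded with $d \times d$ blocks, so only $O(\bw n)$ of the $n^2$ blocks $\mblk{\Gamma}{i}{j}$ are nonzero; and (iii) the banded Cholesky counts already established in Lemma~\ref{lem:comp_banded_cholesky}. Finally I will show that under the hypothesis $\bw \le m$ (which is immediate from $\bw = \max_k m_k \le \sum_k m_k = m$) every intermediate bound is absorbed into $O(d^3 \bw m n)$.

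For Algorithm~\ref{alg:cost_fun_cholesky}, I would bound the steps as follows. Step 1 (forming $\resd{R}$) is the product $R \struct(\sdat{p})$ and costs $O(dmn)$. Step 2 constructs the nonzero blocks $\mblk{\Gamma}{i}{j} = R \mblk{\cmat}{i}{j} R^\top$; here the key observation is that by \eqref{Gamma_ij} the matrix $\mblk{\cmat}{i}{j}$ has only $O(m)$ nonzero entries (a diagonal composed with a block-shift), so $R \mblk{\cmat}{i}{j}$ is obtained in $O(dm)$ and the subsequent multiplication by $R^\top$ in $O(d^2 m)$. Multiplying by the $O(\bw n)$ nonzero blocks yields $O(d^2 m \bw n)$. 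Step 3 is the banded Cholesky, which by Lemma~\ref{lem:comp_banded_cholesky} costs $O(d^3 \bw^2 n)$. Step 4 (two back-substitutions with a $\bw d$-banded triangular factor) costs $O(d^2 \bw n)$, and Step 5 (the squared norm of an $nd$-vector) is $O(dn)$.

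For Algorithm~\ref{alg:gradient}, steps 1--3 of Algorithm~\ref{alg:cost_fun} have the same complexity as the corresponding steps analyzed above (the only difference from Algorithm~\ref{alg:cost_fun_cholesky} is that here one solves $\Gamma y = s(R)$, which via Lemma~\ref{lem:comp_banded_cholesky} again costs $O(d^3 \bw^2 n + d^2 \bw n)$). The first term $2 \invgresdmat \struct^\top(\sdat{p})$ is a $(d \times n)$ by $(n \times m)$ product, hence $O(dmn)$. For the second term \eqref{eq:mgrad_f}, I would evaluate each summand $\invgresdbl{j} \invgresdbl{i}^\top R \mblk{\cmat}{i}{j}$ right-to-left, exploiting the $O(m)$ nonzeros of $\mblk{\cmat}{i}{j}$: the product $R \mblk{\cmat}{i}{j}$ is $O(dm)$, contracting with $\invgresdbl{i}^\top$ is $O(dm)$, and the final outer product with $\invgresdbl{j}$ is $O(dm)$. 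Since only $O(\bw n)$ index pairs $(i,j)$ contribute, the second term costs $O(dm \bw n)$ in total.

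Adding all contributions, the combined bound is $O(dmn + d^2 m \bw n + d^3 \bw^2 n + d^2 \bw n + dm \bw n)$. Using $\bw \le m$ to absorb $d^3 \bw^2 n \le d^3 \bw m n$, and using $d \ge 1$ to absorb $d^2 m \bw n \le d^3 \bw m n$, every term is dominated by $O(d^3 \bw m n)$, which is the claimed bound. The step I anticipate requiring the most care is justifying the $O(d^2 m)$ cost per block of $\Gamma$: one must explicitly use the sparse factorization $\mblk{\cmat}{i}{j} = \diag(\cdot)\, (\shiftmat{\bfm}^\top)^{j-i}$ from \eqref{Gamma_ij} rather than treating $\mblk{\cmat}{i}{j}$ as a general $m \times m$ matrix (which would give the weaker $O(d m^2)$ per block and would not obviously fit into the target bound when $d$ is small compared to $m$).
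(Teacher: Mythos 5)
Your proposal is correct and follows essentially the same route as the paper's proof: a step-by-step operation count for both algorithms, using the $\bw$-block-bandedness of $\Gamma$, the $O(m)$-sparsity of each $\mblk{\cmat}{i}{j}$ from Theorem~\ref{thm:main}, and the banded Cholesky counts of Lemma~\ref{lem:comp_banded_cholesky}, with the same per-step bounds ($O(dmn)$, $O(d^2\bw mn)$, $O(d^3\bw^2 n)$, $O(d^2\bw n)$, $O(d\bw mn)$). You in fact make explicit the final absorption via $\bw \le m$ and $d \ge 1$, which the paper leaves implicit.
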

\proofapp

\begin{note}
By Lemma~\ref{lem:striped}, for the mosaic Hankel structure $\HH_{\bfm, \bfn}$ 
the complexity of the cost function and gradient evaluation is also $O(d^3 \bw m n)$.
\end{note}

\begin{note}\label{not:corr_comp}
The computation on the step 2 in the Algorithm~\ref{alg:correction}  has complexity $O(dmn)$.
\end{note}
%\proofapp

\begin{theorem}\label{thm:jac_pj}
The computational complexity of Algorithm~\ref{alg:jacobian} (for the Jacobian) is $O(d^3 m^2 n)$. The computational complexity of Algorithm~\ref{alg:pseudojacobian} (for the pseudo-Jacobian) is $O(d^3 \bw m n)$.% / $O(d^3 m \bw n)$ multiplications. 
%given in Table~\ref{tab:alg3}.
%\begin{table}[hbt]
%\caption{Upper bounds on number of multiplications in Algorithm~\ref{alg:jacobian}}
%\label{tab:alg3}
%\end{table}
\end{theorem}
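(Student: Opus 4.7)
The plan is to bound the cost of each step of Algorithms~\ref{alg:jacobian} and~\ref{alg:pseudojacobian} using two structural facts from Section~\ref{sec:mosaic}: by Theorem~\ref{thm:main}, the matrix $\Gamma$ is $\bw$-block-banded with $d\times d$ blocks, and by \eqref{Gamma_ij} each block $\mblk{\cmat}{i}{j}$ with $i\le j$ acts on a unit vector as a scaled unit vector (it is a diagonal matrix times a power of $\shiftmat{\bfm}^{\top}$). Both algorithms share the setup of steps~1--3 of Algorithm~\ref{alg:cost_fun}, which by Lemma~\ref{lem:comp_banded_cholesky} costs $O(d^3\bw^2 n)$ for the banded Cholesky factorization of $\Gamma$ and $O(d^2\bw n)$ for a single back-solve; both are dominated by the main loops.

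Next I would analyze the per-$(i,j)$ work. Using the sparsity just noted, each summand in \eqref{eq:zij1} and \eqref{eq:zij2} is an $O(d)$ operation on a single column of $R$ or of $\invgresdmat$, and the sums contain only $O(\bw)$ nonzero terms per output block; therefore $z^{(1)}_j$, $z^{(2)}_{ij}$ and $z_{ij}$ (or $\pseudoz_{ij}$) can each be assembled in $O(\bw d n)$, contributing $O(d^2 m\bw n)$ in total over the $dm$ pairs. For the pseudo-Jacobian, the only remaining work per iteration is a single banded triangular solve on the $dn\times dn$ matrix $\rchol{\Gamma}^{\top}$ of half-bandwidth $d\bw$, costing $O(d^2\bw n)$; summed over the $dm$ pairs this yields the claimed $O(d^3\bw m n)$. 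For the Jacobian, each iteration additionally requires the full banded solve $\Gamma^{-1} z_{ij}$ ($O(d^2\bw n)$) and then the multiplication $G^{\top}\cdot$; the latter is the delicate step, and using $G(R)=(\eyemat{n}\otimes R)\maffine{\struct}\rchol{\wmat}^{-1}$ I would evaluate $G^{\top}v$ for $v\in\bbR^{dn}$ in three stages: (i) reshape $v$ as a $d\times n$ matrix and multiply by $R^{\top}$ in $O(dmn)$; (ii) apply $\maffine{\struct}^{\top}$, which for each layered Hankel block sums antidiagonals, in $O(mn)$; and (iii) scale by the diagonal $\rchol{\wmat}^{-\top}$ in $O(\np)$. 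An analogous decomposition bounds the cost of $(\partial G^{\top}/\partial R_{ij})\invgresd$ by the same quantity.

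Aggregating, each Jacobian iteration costs $O(d^2\bw n + dmn)$, so the total is
\[
dm\cdot O(d^2\bw n + dmn) \;=\; O(d^3\bw m n + d^2 m^2 n) \;\subseteq\; O(d^3 m^2 n),
\]
using $\bw\le m$ and $d\le m$. The principal obstacle is the careful exploitation of the almost-permutation structure of $\mblk{\cmat}{i}{j}$ in \eqref{Gamma_ij}: a naive treatment of the sums in \eqref{eq:zij1}--\eqref{eq:zij2} would pay an extra factor of $m$ per summand and ruin both bounds. The rest is bookkeeping on top of the banded-solve estimates already used in the proof of Theorem~\ref{thm:cost_grad}.
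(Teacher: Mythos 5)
Your proposal is correct and follows essentially the same route as the paper's proof: exploit the $\bw$-block-bandedness of $\cmat$ and the one-nonzero-per-row structure of $\mblk{\cmat}{i}{j}$ to assemble each $z_{ij}$ (or $\pseudoz_{ij}$) in $O(d\bw n)$, then charge $md$ banded triangular solves for the pseudo-Jacobian and an extra $O(dmn)$ multiplication by $G^{\top}$ per pair for the Jacobian, exactly as the paper does via Note~\ref{not:corr_comp}. Your three-stage evaluation of $G^{\top}v$ and your explicit handling of the $(\partial G^{\top}/\partial R_{ij})\invgresd$ term are slightly more detailed than the paper's one-line citation, but they yield the same bounds.
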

\proofapp

%\begin{center}
%\begin{tabular}{ | l | c | }\hline
%Step  & General (weighted)         \\ \hline
%3     & $m \times (4d \bw n)$      \\ \hline  
%5     & $md \times (2d \bw n)$     \\ \hline
%6     & $md \times (d^2 \bw n) / md \times (2d^2 \bw n + dmn)$    \\ \hline
%Total & $O(d^3 m \bw n) / O(d^2 m^2 n)$   \\ \hline
%\end{tabular}
%\end{center}

Next, we show that the complexity is linear in $m$ (for fixed $d$ and $n$) for the cost function and the gradient in the case \eqref{eq:blockwisew}. In the cost function evaluation, the most expensive step is the Cholesky factorization, which can be performed in linear in $m$  number of operations in this case \cite{VanHuffel.etal04ICSM-High}. The computation of the gradient can be also simplified due to block-Toeplitz structure of $\Gamma$.
\begin{proposition}\label{prop:toeplitz_gradient}
Let $\invgresd$, $\invgresdmat$ be defined as in \eqref{eq:invgresd}, \eqref{eq:invgresdmat}, and weights satisfy
\eqref{eq:blockwisew}. Then the gradient \eqref{eq:mgrad_f} can be simplified to
\[
\mgrad{d}{m}(\costfun) = 2 \invgresdmat \struct^{\top}(\sdat{p}) - 
2\Big(N_0 R \cmat_0 + \sum\limits_{0 < k \le \min(n,\bw)} \big(N_k R \cmat_{k}  + N_k^\top R \cmat_{k}^{\top} \big)\Big),
\]
where $N_k \defeq \invgresdmat (\shiftmat{n}^{\top})^k (\invgresdmat)^{\top}$.
\end{proposition}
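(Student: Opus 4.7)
The plan is to start from the gradient formula \eqref{eq:mgrad_f} of Proposition~\ref{prop:gradient} and specialize the double sum using the block-Toeplitz structure of $\cmat$ guaranteed by Theorem~\ref{thm:main}, item~\ref{it:toeplitz}, under the block-wise weights assumption \eqref{eq:blockwisew}. The first term $2\invgresdmat \struct^\top(\sdat{p})$ already matches the target expression, so only the sum over the blocks $\mblk{\cmat}{i}{j}$ needs rewriting.

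First I would substitute $\mblk{\cmat}{i}{j} = \cmat_{j-i}$ into the sum (whose indices range over $i,j \in 1{:}n$) and perform the change of summation variable $k = j - i$, grouping the summands along block-diagonals of $\cmat$:
\[
\sum_{i,j=1}^{n} \invgresdbl{j} \invgresdbl{i}^\top R \cmat_{j-i}
= \sum_{k=-(n-1)}^{n-1} M_k\, R\, \cmat_{k}, \qquad
M_k \defeq \sum_{i\,:\, 1 \le i,\; i+k \le n} \invgresdbl{i+k} \invgresdbl{i}^\top.
\]

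Next I would identify these coefficient matrices $M_k$ with the quantities $N_k$ appearing in the statement. Interpreting $\shiftmat{n}$ as the column-shift operator acting from the right on a $d \times n$ matrix, a direct computation shows that, for $k \ge 0$,
\[
\invgresdmat\, (\shiftmat{n}^\top)^k\, \invgresdmat^\top \;=\; \sum_{i=1}^{n-k} \invgresdbl{i+k} \invgresdbl{i}^\top \;=\; M_k \;=\; N_k,
\]
and analogously $M_{-k} = N_k^\top$. Combined with the symmetry $\cmat_{-k} = \cmat_{k}^\top$, which follows because $\cmat = \maffine{\struct} \wmat^{-1} \maffine{\struct}^\top$ is symmetric and block-Toeplitz, the contributions of $+k$ and $-k$ merge into $N_k R \cmat_k + N_k^\top R \cmat_k^\top$ for every $k > 0$, giving precisely the structure in the claimed formula.

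Finally, the block-bandedness of $\cmat$ with bandwidth $\bw = \max_l m_l$ (Theorem~\ref{thm:main}, item~3) forces $\cmat_k = 0$ for $k \ge \bw$, which truncates the summation to $0 < k \le \min(n, \bw)$; the bound $n$ appears only because for $k \ge n$ the coefficient $M_k$ is a vacuously empty sum. No step looks conceptually hard; the main obstacle is bookkeeping, namely verifying the shift-matrix identity $N_k = \invgresdmat (\shiftmat{n}^\top)^k \invgresdmat^\top$ and tracking the change-of-variable bounds so that exactly the index pairs with $j - i = k$ contribute on each block-diagonal.
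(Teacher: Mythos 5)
Your proposal is correct and follows essentially the same route as the paper's proof: reindex the double sum in \eqref{eq:mgrad_f} along block-diagonals via $k=j-i$, identify the coefficient matrices with $N_k = \invgresdmat(\shiftmat{n}^\top)^k\invgresdmat^\top$ (and $N_k^\top$ for $k<0$), and fold the negative offsets into the positive ones using $\cmat_{-k}=\cmat_k^\top$ together with the block-bandedness truncation. The only cosmetic difference is that you spell out the change of variables and the shift-matrix identity in more detail than the paper does.
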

\proofapp

Using Proposition~\ref{prop:toeplitz_gradient}, the following theorem can be proved.

\begin{theorem}\label{thm:comp_blt}
For the case \eqref{eq:blockwisew}, the complexity of the cost function and gradient evaluation is equal to $O(d^3 m n)$.
\end{theorem}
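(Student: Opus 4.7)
My plan is to combine the structural result of Theorem~\ref{thm:main}(\ref{it:toeplitz}) (block-Toeplitz block-banded $\Gamma$) with the compact gradient formula of Proposition~\ref{prop:toeplitz_gradient}, and then bound the work in each step of Algorithms~\ref{alg:cost_fun_cholesky} and \ref{alg:gradient} under this extra structure. Throughout I will use $\bw = \max_k m_k \le m \le n$.

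\textbf{Cost function.} I walk through Algorithm~\ref{alg:cost_fun_cholesky}. The residual $\resd{R}$ in \eqref{eq:resd} takes $O(dmn)$ since $\struct(\sdat{p}) \in \bbR^{m\times n}$ and $R \in \bbR^{d\times m}$. Because $\Gamma$ is block-Toeplitz, the blocks $\{\mblk{\Gamma}{i}{j}\}$ reduce to only $\bw$ distinct $d\times d$ matrices $\Gamma_k = R\cmat_k R^\top$ for $k=0,\dots,\bw-1$; each is built in $O(d^2 m)$ from the sparse $\cmat_k$ (which has $O(m)$ nonzeros by the explicit form in Theorem~\ref{thm:main}(\ref{it:toeplitz})), giving $O(d^2 m\bw)$ total. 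The key gain comes at step 3: the Cholesky factorization of a positive definite $\bw$-block-banded \emph{block-Toeplitz} matrix of dimension $nd\times nd$ is computable in $O(d^3 \bw n)$ operations via the displacement-structure (generalised Schur) algorithm of~\cite{VanHuffel.etal04ICSM-High}, replacing the $O(d^3 \bw^2 n)$ bound of Lemma~\ref{lem:comp_banded_cholesky}. The back-solve for $\invgresd$ and the norm in the final steps are $O(d^2 \bw n)$ and $O(dn)$, respectively. Summing and using $\bw \le m \le n$ yields $O(dmn + d^3 \bw n) = O(d^3 mn)$.

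\textbf{Gradient.} Here I use Proposition~\ref{prop:toeplitz_gradient} in place of the generic sum \eqref{eq:mgrad_f}, which has only $\bw+1$ nontrivial terms. The matrix $\invgresdmat \in \bbR^{d\times n}$ is obtained from the already-computed Cholesky factor in $O(d^2 \bw n)$. Forming each $N_k = \invgresdmat (\shiftmat{n}^\top)^k \invgresdmat^\top$ costs $O(d^2 n)$, so computing all $\bw$ of them is $O(d^2 \bw n)$. Each product $N_k R\cmat_k$ decomposes into $N_k R$ in $O(d^2 m)$ followed by right multiplication by the sparse $\cmat_k$ in $O(dm)$, for $O(d^2 m \bw)$ in total. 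Finally, $\invgresdmat \struct^\top(\sdat{p})$ costs $O(dmn)$. Summing, the gradient evaluation is $O(dmn + d^2 \bw n + d^2 m \bw)$, which is $O(d^2 mn)$ and hence within $O(d^3 mn)$.

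\textbf{Main obstacle.} The nontrivial ingredient is the $O(d^3 \bw n)$ bound on the block-Toeplitz banded Cholesky factorization; every other step is a direct counting exercise once Theorem~\ref{thm:main}(\ref{it:toeplitz}) and Proposition~\ref{prop:toeplitz_gradient} are in place. I will therefore cite~\cite{VanHuffel.etal04ICSM-High} for that step and concentrate the written proof on aggregating the per-step bounds above and verifying that each is dominated by $d^3 mn$ under $\bw\le m\le n$.
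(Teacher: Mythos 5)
Your proposal is correct and follows essentially the same route as the paper's own proof: it exploits the block-Toeplitz structure so that only $O(\bw)$ distinct blocks $\Gamma_k$ need forming, invokes the Toeplitz-exploiting Cholesky factorization of \cite{VanHuffel.etal04ICSM-High} at cost $O(d^3\bw n)$, and uses Proposition~\ref{prop:toeplitz_gradient} with the $N_k$ matrices for the gradient. The only difference is that you spell out the per-step accounting (and the dominance of $O(dmn)$ terms under $\bw\le m\le n$) more explicitly than the paper does, which is a harmless elaboration rather than a different argument.
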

\proofapp

\begin{note}
The computations can be further simplified for mosaic Hankel matrices \eqref{eq:mosaic} and block-wise weights, which are constant along the block rows of the mosaic Hankel matrix, \ie
\[
w = \vcol\left(
w^{(1,1)},\ldots,w^{(q,1)},
\ldots,
w^{(1,N)},\ldots,w^{(q, N)}
\right),\quad \mbox{where}\quad w^{(l,k)} \equiv \omega_l,\; \omega_l \in \bbR.
\]
Let the $\Gamma^{(l)}$ be the $\Gamma$ matrix for the structure $\HH_{\bfm, n_l}$, as in Lemma~\ref{lem:striped}. Then all $\Gamma^{(l)}$ are submatrices of $\Gamma^{(l_{max})}$, where $n_{l_{max}} \ge n_l$ for all $l$. Therefore, only the Cholesky factorization of $\Gamma^{(l_{max})}$ needs to be computed.
\end{note}

\begin{note}\label{not:wslra}
We have considered only the case when $\wmat$ is diagonal. However, by Theorem~\ref{thm:main}, $\Gamma$ is block-banded/block-Toeplitz if $\wmat^{-1}$ is $b$-banded. In this case the algorithms will have complexity similar to that in Theorems~\ref{thm:cost_grad}--\ref{thm:comp_blt}, with an additional factor $b$. 
\end{note}
Note~\ref{not:wslra} generalizes the results of \cite{Markovsky.VanHuffel06-Weighted}, where this type of weights was first considered.

\subsection{Numerical experiments}
First, we consider a family of $2\times 2$ mosaic Hankel matrices 
\begin{equation}
\HH_{\bsm m_1 & m_2 \esm, \bsm n_1 & n_2 \esm}, m_1 = 20, m_2 = 22, \bmx n_1 &n_2\emx = \bmx 250, 255\emx + 250 k.
\label{eq:ex1}\tag{EX1}
\end{equation}
In the case of $2$-norm ($\wmat =\eyemat{\np}$) we compute the cost function and its derivatives using the SLRA package \cite{Markovsky.Usevich13JCAM-Software}.
Hereafter, we use the term ``element-wise variant'' for the algorithms that treat weights as different values (implemented in the {\tt WLayeredHankelStructure} C++ class). We use the term ``block-wise variant'' for the algorithms that utilize Theorem~\ref{thm:comp_blt} (block-Toeplitz structure of $\Gamma$), and is implemented in the {\tt LayeredHankelStructure} C++ class. In these examples,we consider only the rank reduction $d =1$ ($r = m-1$).

\begin{figure}[!ht]
\begin{center}
\includegraphics[height=1.2in]{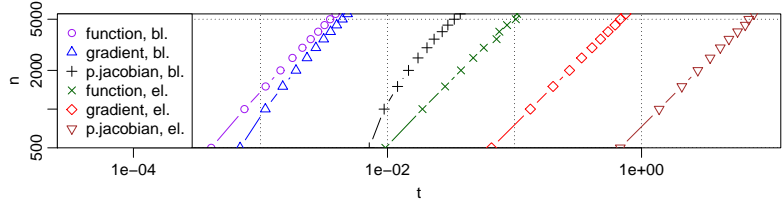}    % The printed column  
\caption{The number of columns $n$ versus computation time $t$, log-log scale. Computation times are shown for the cost function, gradient and pseudo-Jacobian, depending on $n$. ``el'' denotes element-wise variant and ``bl'' denotes block-wise variant.} 
\label{fig:ex1}                                 
\end{center}                                
\end{figure}

Fig.~\ref{fig:ex1} shows the time needed for computation of the cost function, gradient and pseudo-Jacobian for \eqref{eq:ex1}, with $k \in 0:10$. The computation time is plotted in logarithmic scale. In all cases the computation time is bounded by a linear function in $n$. This empirical observation is in agreement with Theorem~\ref{thm:cost_grad} and Theorem~\ref{thm:comp_blt}. 

Next, we show the computation time for varying $m$, for scalar Hankel matrices. In Fig.~\ref{fig:ex3} the computational time is plotted for element-wise and block-wise variants, as explained above. %From Fig.~\ref{fig:ex3} one can see that the computation time for the cost function and gradient in the element-wise variant the computation time grows quadratically in $m$, which is in agreement with Theorem~\ref{thm:cost_grad}. 

%\begin{figure}[!ht]
%\begin{center}
%\includegraphics[height=8cm]{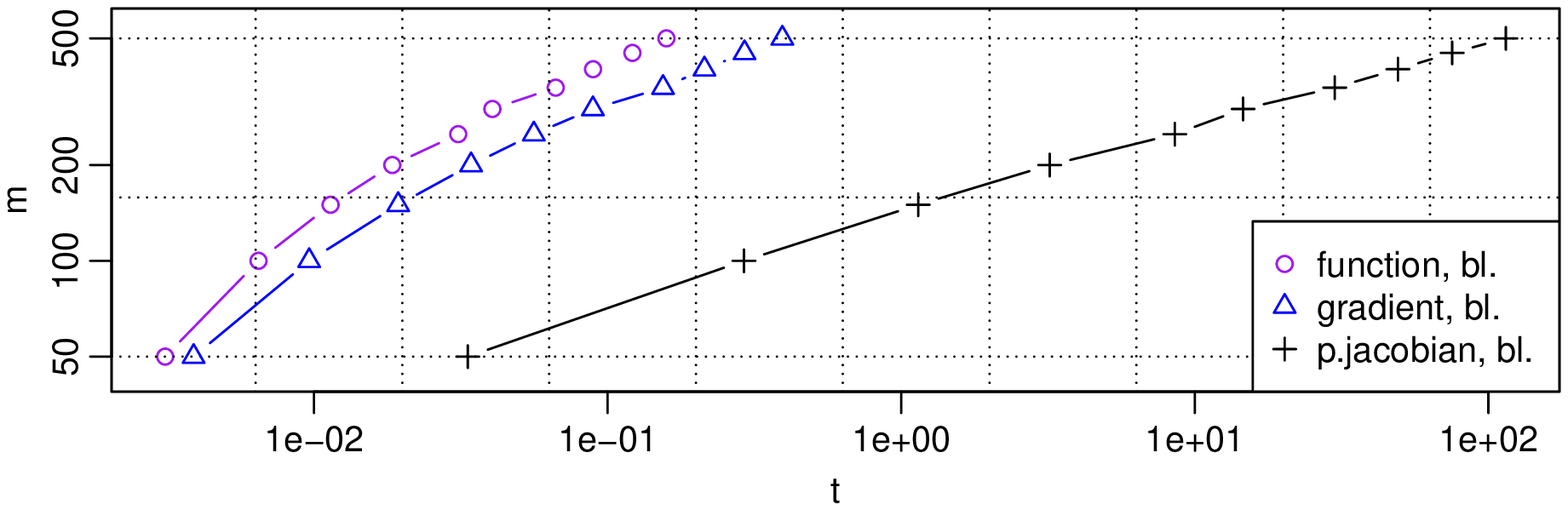}    % The printed column  
%\caption{Computation time divided by $m$ for the cost function, gradient and pseudo-Jacobian, for structure $\HH_{m,2000}$ and block-wise variant. The computation time for the pseudo-Jacobian is divided by 20.} 
%\label{fig:ex2}                                 
%\end{center}                                
%\end{figure}

\begin{figure}[!ht]
\begin{center}
\subfigure{\includegraphics[height=1.4in]{testcomp2.eps}}\\ \subfigure{\includegraphics[height=1.5in]{}}
\caption{The number of rows $m$ versus computation time $t$, log-log scale. Computation times are shown  for the cost function, gradient and pseudo-Jacobian, for the structure $\HH_{m,2000}$. Top: block-wise variant, bottom: element-wise variant.} 
\label{fig:ex3}                                 
\end{center}                                
\end{figure}

Fig.~\ref{fig:ex3} shows that the computational time for the cost function and the gradient is growing faster than linearly in $m$ (in contrast to Theorems~\ref{thm:cost_grad} and~\ref{thm:comp_blt}). The computation of the pseudo-Jacobian is also growing faster than quadratically in $m$ (in contrast to Theorem~\ref{thm:jac_pj}). These effects can be expected because in the current version of the software \cite{Markovsky.Usevich13JCAM-Software} (7 March 2013) products by $\mblk{\cmat}{i}{j}$ matrices are implemented as products by precomputed matrices, and not as element-wise products.

The examples in this section are reproducible and can be found in the directory 
\begin{center}
{\tt /test\_c/test\_speed } 
\end{center}
of the publicly available software package \cite{Markovsky.Usevich13JCAM-Software}. The results in this section were  obtained on a 2.6GHz Intel Core i5 CPU  with 4GB RAM, running under 64-bit  Linux Mint Debian Edition. 

\section{Conditioning of $\Gamma$, solvability conditions, and accuracy of computations}\label{sec:gamma}
It is important to know the condition number $\kappa_2(\Gamma)$ in order to use the methods of the paper. Indeed, $\kappa_2(\Gamma)$ should be finite in order to apply the efficient algorithms of the paper ($\Gamma$ should be invertible). If $\Gamma$ is invertible, then $\kappa_2(\Gamma)$ determines the accuracy of the computational process. Next, we discuss the invertibility of $\Gamma$ and the behavior of $\kappa_2(\Gamma)$ for mosaic Hankel matrices.

\subsection{Invertibility of $\Gamma$}
In this subsection, we investigate the conditions of invertibility of $\Gamma$ for mosaic Hankel matrices \eqref{eq:mosaic}. By Lemma~\ref{lem:striped} it is necessary that $G(R)$ is of full row rank for each $\HH_{\bfm,n_k}$. By simple calculation of dimensions we have that the necessary condition \eqref{eq:NC} is equivalent to
\[
\sum\limits_{l=1}^q (m_l + n_k -1) \ge n_k d \iff (q-d)n_k \ge q-m,
\]
which is satisfied if $d \le q$ (and is equivalent to  $d\le q$ if $m < n_k-q$ for all $k$).

As it was noted in \cite{Markovsky.Usevich13JCAM-Software}, the condition $d \le q$ can always be satisfied in applications of mosaic Hankel \eqref{eq:prob_slra} to approximate realization, system identification and model reduction. In particular, any Hankel \eqref{eq:prob_slra} can be reduced to an equivalent problem with $d =1$.
\begin{proposition}\label{prop:slra_hankel}
For the Hankel structure $\HH_{m,\np-m+1}$, $r < \min(m,\np-m+1)$ and $\sdat{p} \in \bbR^{\np}$, problem \eqref{eq:prob_slra} is equivalent to \eqref{eq:prob_slra} for the structure $\HH_{r+1, \np-r}$, rank $r$ and  $\sdat{p}$.
\end{proposition}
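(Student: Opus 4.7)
The plan is to establish the equivalence of the two SLRA problems by showing that both the cost function and the feasible set coincide. The cost $\|\sdat{p}-\sest{p}\|$ depends only on the parameter vector $\sest{p}\in\bbR^{\np}$ and not on the matrix arrangement, so the two objectives agree automatically. It therefore remains to prove that for every $p\in\bbR^{\np}$,
\[
\rrank \HH_{m,\np-m+1}(p)\le r
\iff
\rrank \HH_{r+1,\np-r}(p)\le r,
\]
which is an instance of the classical Hankel rank invariance: with $m+n-1=\np$ fixed, the rank-$\le r$ property of $\HH_{m,n}(p)$ is independent of the splitting $(m,n)$ as soon as $\min(m,n)>r$.

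For the direction $\rrank \HH_{r+1,\np-r}(p)\le r\Rightarrow\rrank \HH_{m,\np-m+1}(p)\le r$, I would exploit Hankel shift invariance. Linear dependence of the $r+1$ rows of $\HH_{r+1,\np-r}(p)$ yields nonzero coefficients $(c_0,\ldots,c_r)$ satisfying the length-$(r+1)$ recurrence $\sum_{i=0}^r c_i p_{j+i}=0$ for all $j=1,\ldots,\np-r$. Embedding these coefficients into $\bbR^m$ at the $m-r$ shifted positions $k=0,\ldots,m-r-1$ produces vectors lying in the left null space of $\HH_{m,\np-m+1}(p)$, since every shifted evaluation point $k+j$ stays inside $[1,\np-r]$, i.e.\ the validity range of the recurrence. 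These shifted vectors are linearly independent by an echelon argument on their support patterns, so the left null space has dimension at least $m-r$ and the rank is at most $r$.

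The reverse direction is the more delicate one. A subspace dimension count in $\bbR[z]_{<m}$ shows that the left null space $N$ of $\HH_{m,\np-m+1}(p)$ satisfies $\dim N\ge m-r$, and intersecting with the degree-$\le r$ subspace gives $\dim(N\cap\bbR[z]_{\le r})\ge 1$; this produces a nonzero length-$(r+1)$ recurrence valid on the partial range $j=1,\ldots,\np-m+1$. The main obstacle is extending this partial recurrence to the full range $j=1,\ldots,\np-r$ required for annihilation of all columns of $\HH_{r+1,\np-r}(p)$, since in general $\max(m,\np-m+1)<\np-r$ and symmetric use of the right null space is not sufficient. The cleanest route to close this gap is a realization-theoretic argument: starting from a rank-$r$ factorization $\HH_{m,\np-m+1}(p)=UV$, the Hankel shift relation between consecutive rows of $U$ (equivalently, consecutive columns of $V$) yields a state-space realization $p_k=cA^{k-1}b$ with $A\in\bbR^{r\times r}$, whose existence uses precisely the hypothesis $r<\min(m,\np-m+1)$ to rule out boundary degeneracies. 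Given such a realization, every Hankel matrix $\HH_{m',n'}(p)$ with $m'+n'-1=\np$ factors through the $r$-dimensional state space, so $\rrank \HH_{r+1,\np-r}(p)\le r$ follows immediately.
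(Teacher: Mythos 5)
Your reduction of the problem to the rank-equivalence statement
\[
\rrank \HH_{m,\np-m+1}(p)\le r \iff \rrank \HH_{r+1,\np-r}(p)\le r
\quad\text{for } r<\min(m,\np-m+1)
\]
is exactly the paper's first step; the paper then simply invokes Prop.~5.4 of Heinig and Rost (1984) rather than proving this classical fact. Your shift-embedding argument for the direction $\rrank\HH_{r+1,\np-r}(p)\le r\Rightarrow\rrank\HH_{m,\np-m+1}(p)\le r$ is correct and complete. The gap is in the converse, precisely where you place the difficulty: the realization-theoretic step you propose to close it is false. Rank $\le r$ of a finite Hankel matrix does \emph{not} imply the existence of an order-$r$ realization $p_k=cA^{k-1}b$, and the hypothesis $r<\min(m,\np-m+1)$ does not rule out the boundary degeneracy. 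Concretely, take $\np=5$, $p=(0,0,0,0,1)$, $m=3$, $r=1$: then $\rrank\HH_{3,3}(p)=1<\min(3,3)$, but no scalar realization $p_k=c\,a^{k-1}b$ produces this sequence, since $p_1=cb=0$ forces every term to vanish. (The proposition's conclusion still holds here, as $\HH_{2,4}(p)$ also has rank $1$, but not by your mechanism.) The underlying failure is that in a rank-$r$ factorization $\HH_{m,n}(p)=UV$ the shifted row spaces of $U$ need not be nested, so no matrix $A$ implementing the shift exists; sequences of the form $(0,\dots,0,1)$ are the canonical obstruction.

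Handling this ``singular part'' is the actual content of the Iohvidov/Heinig--Rost theory of the characteristic of a finite Hankel matrix, which is why the paper cites the result rather than reproving it. To repair your proof you must either cite that result as the paper does, or replace the realization step by a direct kernel-structure argument that extends the partial recurrence of length $r+1$ (valid for $j=1,\dots,\np-m+1$) to the full range $j=1,\dots,\np-r$; this extension genuinely uses the two-sided hypothesis $r<\min(m,\np-m+1)$ and is not a routine computation.
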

\begin{proof}
It is sufficient to show that if $r < \min(m,\np-m+1)$
\[
\rrank \HH_{m,\np-m+1} (p) \le r  \iff \rrank \HH_{r+1, \np-r} (p)  \le r,
\]
which is a corollary of \cite[Prop. 5.4]{Heinig.Rost84-Algebraic}.
\end{proof}

The next proposition shows that for the general mosaic Hankel structure, if $d \le q$ then $\Gamma$ is nonsingular for almost all $R$ (is singular on a submanifold of smaller dimension).
\begin{proposition}\label{prop:mosaic_nonsing}
Let  $d \le q$, and $\wmat$ be positive definite. 
\begin{itemize}
\item If $d=1$ then $\Gamma(R)$ is nonsingular for all $R \in \bbR^{1 \times m} \setminus \{0_{1\times m}\}$.
\item If $d > 1$, there exists a permutation matrix $\Pi \in \bbR^{m \times m}$ such that \eqref{eq:GammaG} is nonsingular at $R =\bmx X & \eyemat{d} \emx\Pi$  for any $X \in \bbR^{d\times (m-d)}$.
\end{itemize}
\end{proposition}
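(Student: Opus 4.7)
The approach reduces the problem to linear independence of rows of $G(R)$, since $\Gamma(R) = G(R) G^\top(R)$ is invertible if and only if $G(R)$ has full row rank~$nd$. The plan proceeds by two structural reductions followed by a combinatorial choice of submatrix.

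First, by Lemma~\ref{lem:striped}, for the mosaic structure $\HH_{\bfm,\bfn}$ with block-diagonal~$\wmat$, the matrix $G(R)$ is block-diagonal with one block per stripe $\HH_{\bfm,n_l}$; hence $\Gamma$ is block-diagonal and it suffices to treat each layered Hankel $\HH_{\bfm,n}$ separately. By Lemma~\ref{lem:layered}, in that setting $G(R) = [G^{(1)}(R^{(1)}),\ldots,G^{(q)}(R^{(q)})]$ with each $G^{(k)}$ of Sylvester-like form~\eqref{G_h} (with $m$ replaced by $m_k$).

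For the case $d = 1$, I would pick any $k$ with $R^{(k)} \ne 0$ (which exists because $R \ne 0$) and let $i_0$ be the first nonzero index of $R^{(k)}$; by the shift structure of~\eqref{G_h}, the leading nonzero entry of row~$i$ of $G^{(k)}(R^{(k)})$ sits in column $i+i_0-1$, so the $n$ rows have pairwise distinct leading positions and are linearly independent. Thus $G^{(k)}$, and hence $G$, has full row rank~$n$.

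For the case $d > 1$, the plan is to choose $\Pi$ so that the first column of $R^{(k)}$ equals $e_k \in \bbR^d$ for $k = 1,\ldots,d$, with the remaining entries of~$R$ populated by the entries of $X$; this is possible precisely because $d \le q$. With this placement, the $j$-th column of $G^{(k)}(R^{(k)})$ (for $1 \le j \le n$), regarded as a block-vector with $n$ blocks of length~$d$, carries $e_k$ in block~$j$ and zeros in blocks $j+1,\ldots,n$, while its entries in blocks above~$j$ depend on the $X$-portion of $R^{(k)}$. Collecting these $nd$ columns and ordering them lexicographically by $(j,k)$ yields an $nd \times nd$ submatrix of $G$ that is block upper triangular with each of its $n$ diagonal blocks equal to $\eyemat{d}$; hence it is nonsingular regardless of $X$, giving full row rank of $G$ and invertibility of $\Gamma$. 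The main obstacle is designing the permutation and the column selection so that the $X$-dependent entries of the selected submatrix land strictly above its block diagonal; once that combinatorial bookkeeping is in place, the triangular structure delivers nonsingularity uniformly in $X$ with no further computation.
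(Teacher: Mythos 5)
Your argument is sound and reaches the same destination as the paper's proof --- full row rank of $G(R)$ exhibited via a triangular submatrix generated by the identity block of $R$ --- but the execution differs in a way worth recording. The paper first pads every block $R^{(k)}$ with zero columns so that all $m_k$ equal $m_{\max}$, then applies the interleaving permutation $\Pi^{(m_1,q)}$ to turn $G$ into a block-Sylvester matrix with $d\times q$ blocks $Q^{(1)},\ldots,Q^{(m_1)}$, and places $\eyemat{d}$ in the last block $Q^{(m_1)}$ so that the rows are in echelon form. You skip both the padding and the interleaving: you place the $d$ identity columns as the first columns of $R^{(1)},\ldots,R^{(d)}$ (possible precisely because $d\le q$) and select columns $1,\ldots,n$ from each of $G^{(1)},\ldots,G^{(d)}$; by the shift structure of \eqref{G_h}, block row $i$ of column $j$ of $G^{(k)}$ is $R^{(k)}_{:,j-i+1}$ (zero for $i>j$, equal to $e_k$ for $i=j$, and a column of $X$ for $i<j$), so the resulting $nd\times nd$ submatrix is block upper triangular with diagonal blocks $\eyemat{d}$ and is nonsingular uniformly in $X$. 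That bookkeeping checks out. Your $d=1$ case --- a leading-entry argument on a single nonzero block $G^{(k)}$ --- is also valid and somewhat more direct than the paper's, which runs $d=1$ through the same permutation machinery.

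The one step you omit is the treatment of a general positive definite $\wmat$: Lemmas~\ref{lem:striped} and~\ref{lem:layered} are stated for block-diagonal $\wmat$, and the Sylvester form \eqref{G_h} that your whole argument rests on holds only for $\wmat=\eyemat{m+n-1}$, whereas the proposition assumes only positive definiteness. The repair is the paper's one-liner: by \eqref{eq:lin_matr2}, $G(R)=(\eyemat{n}\otimes R)\maffine{\struct}\rchol{\wmat}^{-1}$ with $\rchol{\wmat}^{-1}$ invertible, so $\rrank G(R)=\rrank\big((\eyemat{n}\otimes R)\maffine{\struct}\big)$ and one may take $\wmat=\eyemat{\np}$ without loss of generality before invoking the block decompositions. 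With that sentence added, your proof is complete.
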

\proofapp

Proposition~\ref{prop:mosaic_nonsing} is an extended to the mosaic Hankel matrices Corollary 4.11 from \cite{Markovsky.etal06-Exact}.

\subsection{Accuracy of the computational process}

The key computational procedure in evaluation of the cost function and its derivatives is the solution of a system of equations  $\Gamma u = v$ using Cholesky factorization. The accuracy of this step depends on the condition number of $\Gamma$ \cite{Golub.VanLoan96-Matrix}. In what follows, we review some results concerning conditioning of symmetric block-Toeplitz $\mu$-block-banded $\Gamma$ (the case of layered Hankel structures with block-wise weights, see Theorem~\ref{thm:main}). For convenience, we denote by $\Gamma^{(n)}$ the $nd \times nd$ $\Gamma$ matrix.

Since $\Gamma^{(n)}$ is symmetric positive-semidefinite, $\kappa_2(\Gamma^{(n)})$ (the condition number for $2$-norm) is equal to the  ratio of its extreme eigenvalues $\lambda_{min} (\Gamma^{(n)})$ and $\lambda_{max} (\Gamma^{(n)})$. The properties of the eigenvalues of the matrices $\Gamma^{(n)}$  are determined by their generating function $\rmF: \bbC \to \bbC^{m \times m}$, defined as 
\[
\rmF(z) = \sum\limits_{k=-\mu}^{\mu} \Gamma_k z^{k}.
\]
Since $\rmF(z)$ is Hermitian for all $z$ and $\rmF$ is continuous on the unit circle $\bbT$, we can define
\[
a_\rmF \defeq \min_{\bbT} \lambda_{min} \left(\rmF(z)\right) \ge 0, \quad
b_\rmF \defeq \max_{\bbT} \lambda_{max} \left(\rmF(z)\right) < \infty.
\]
The results of \cite{Miranda.Tilli00SJMAA-Asymptotic} imply that 
$\lambda_{min} (\Gamma^{(n)}) \searrow a_{\rmF} \quad \mbox{and} \quad
\lambda_{max} (\Gamma^{(n)}) \nearrow b_{\rmF}$,
\ie~the eigenvalues are in $[a_\rmF;b_\rmF]$ and converge to the endpoints as $n \rightarrow \infty$. Therefore, 
\[
\kappa_2(\Gamma^{(n)}) \nearrow \frac{b_{\rmF}}{a_\rmF}.
\]
If $\rmF(z)$ is positive definite on $\bbT$, then $\kappa_{2}(\Gamma^{(n)}) \le {b_{\rmF}}/{a_\rmF} < \infty$. Otherwise, $\kappa_{2}(\Gamma^{(n)}) \rightarrow \infty$.

For simplicity, in what follows we consider the case $d=1$. For Hankel structures we have
\begin{equation}
\rmF(z) = R(z) \overline{R(z)} = |R(z)|^2 \quad\mbox{for} \quad z \in \bbT,
\label{eq:Fhankel}\tag{$\rmF_{\HH_{m,n}}$}
\end{equation}
where $R(z) = R_{1,1} +  R_{1,2} z + \ldots + R_{1,m} z^{m-1}$ is the characteristic polynomial of the difference equation defined by $R \in \bbR^{1\times m}$. By Lemma~\ref{lem:layered}, in the general case of mosaic Hankel matrices with block-wise weights $w_l$, the function $\rmF$ is expressed as
\begin{equation}
\rmF(z) = \sum\limits_{k=1}^{q} w_l^{-1} |R^{(k)}(z)|^2 \quad\mbox{for} \quad z \in \bbT.
\label{eq:Fmosaic}\tag{$\rmF_{\HH_{\bfm,n}}$}
\end{equation}
For Hankel structures, the condition number of $\Gamma^{(n)}$ tends to infinity if and only if $R(z)$ has roots on $\bbT$. The growth rate is $n^{2\alpha}$ \cite{Serra98LAA-extreme}, where $\alpha$ is the maximal root multiplicity on $\bbT$. In Fig~\ref{fig:cond}, we plot the dependence of $\kappa_2$ on $n\in 100:1000$ for $K = 50$ random instances of 
\[
R(z) = \prod\limits_{k=1}^4 (z - \lambda_k)(z - \overline{\lambda_k}).
\]
The roots $\lambda_k$ are generated uniformly on the upper half of the unit circle, and the condition number was computed by the function {\tt cond} of Matlab.
\begin{figure}[!ht]
\begin{center}
\includegraphics[height=2cm]{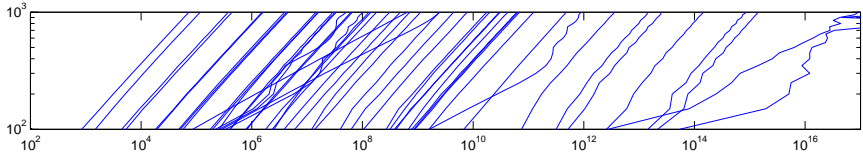}    % The printed column  
\caption{$n$ ($y$-axis) versus $\kappa_2(\Gamma^{(n)})$ ($x$-axis), in log-log scale.} 
\label{fig:cond}                                 
\end{center}                                
\end{figure}
Fig.~\ref{fig:cond} shows that the matrices can be very ill-conditioned. In most cases, we have
$\kappa_n$ grows as $n^2$. In a few cases, the condition number growth is similar to $n^4$, which can be explained by the neighboring roots behaving as multiple roots.

In the mosaic Hankel case, $\kappa_2(\Gamma^{(n)})$ is unbounded if and only if \eqref{eq:Fmosaic} has zeros on $\bbT$.
If only one block has a nonzero weight, then the function \eqref{eq:Fmosaic} is of form \eqref{eq:Fhankel} and $\Gamma^{(n)}$ has the same conditioning properties as that for the scalar Hankel structure. Otherwise, if at least two blocks have weights $w_l < \infty$ (are not fixed), then all corresponding polynomials in \eqref{eq:Fmosaic} need to have common zeros on $\bbT$ in order  for $\rmF$ to have a zero on $\bbT$. 

In system identification---one of the main applications of \eqref{eq:prob_slra} \cite{Markovsky08A-Structured}---the former case corresponds to output-error identification. The latter case corresponds to errors-in-variables system identification. For several polynomials, it is less common (compared to a single polynomial) to have approximately common zeros on or close to the unit circle. Therefore, errors-in-variables identification with \eqref{eq:prob_slra} is typically better conditioned than output-error identification.

\section{Conclusions}
In this paper we considered the structured low-rank approximation problem for general affine structures, weighted 2-norms and fixed values constraints. We used the variable projection principle, which has many advantages when applied to \eqref{eq:prob_slra}. The $\Gamma$ matrix in \eqref{eq:f_unweighted} is structured and its structure is determined by the original matrix structure. For the mosaic Hankel structure \eqref{eq:mosaic}, the $\Gamma$ matrix is block-banded/block-Toeplitz, depending on the structure of the weight matrix. This allows us to evaluate the cost function $f(R)$ and its derivatives in $O(m^2 n)$ flops ($O(mn)$ for the cost function and the gradient if $\Gamma$ is block-Toeplitz). The approach in this paper can be applied to other matrix structures $\struct$, where the structure of $\Gamma$ (\eg~sparseness) can be exploited for efficient computations.

Whenever possible, we considered the most general cases (structures, weights) and developed the algorithms for the general affine structure. We showed how the cost functions for the blocked structures (layered, striped) can be expressed through the cost functions of the blocks. This allowed us to reduce the mosaic Hankel case to the scalar Hankel case and helped to simplify the derivations of the algorithms and their complexities (compared to \cite{Markovsky.etal04NLAA-computation,Markovsky.etal06-Exact}). We also considered fixed values as a special case of the weighted norm, which also simplified  treatment of this case. 

The developed algorithms require invertibility of the $\Gamma$ matrix. For 
affine structures, a necessary condition of invertibility is $\np \ge nd$, which for mosaic Hankel matrices is satisfied if the rank reduction $d$ does not exceed the number of block rows $q$ in \eqref{eq:mosaic}. If the condition $d \le q$ is satisfied, the $\Gamma$ matrix is nonsingular for a generic $R$. 

The accuracy of the computations mainly depends on the conditioning of $\Gamma$. If $\Gamma$ is block-Toeplitz, the properties of its condition number can be expressed through its generating function. Deeper investigation of properties of $\Gamma$, such as invertibility and conditioning, is a direction of future research.
%In generic cases, the condition number is uniformly bounded for all $n$. However, for certain cases, the condition number grows polynomially in $n$. These cases correspond to polynomial having roots on the unit circle.

\section*{Acknowledgements}
We would like to thank anonymous reviewers for the very valuable comments. The research leading to these results has received funding from the European Research Council under the European Union's Seventh Framework Programme (FP7/2007-2013) / ERC Grant agreement no. 258581 ``Structured low-rank approximation: Theory, algorithms, and applications''.

\appendix
\section{Proofs}\label{sec:proofs}

\begin{proof}[Proof of Proposition~\ref{prop:weighted_cases}]
If $\wmat$ is positive definite, then \eqref{eq:repar_w} is an invertible transformation and problems \eqref{eq:wslra_reduced} and \eqref{eq:prob_slra} are equivalent by \eqref{eq:SdeqS} and \eqref{eq:wnorm_2norm}.

Now consider the case when $\wmat$ is given by \eqref{eq:wtoW}. We have that
\[
\rchol{\wmat}^{-1} = \diag(\sqrt{w_1}^{-1}, \ldots, \sqrt{w_{\np}}^{-1}),
\]
where $\sqrt{\infty}^{-1} = 0$. Therefore, the parametrization~\eqref{eq:repar_w} keeps $\sest{p}_k = (\sdat{p})_k$ for fixed values ($k$ such that $w_k = \infty$) and runs over all possible $\sest{p}_k$ for other $k$. Therefore, all admissible $\sest{p}$ for \eqref{eq:prob_slra} (with fixed values constraints) can be parametrized as \eqref{eq:repar_w}.

Instead of \eqref{eq:wnorm_2norm}, in the case \eqref{eq:wtoW} we have that
\begin{equation}
\|\sdelt{p}\|^2_{2} =
\|\sest{p} - \sdat{p}\|^{2}_{\wmat} + \sum\limits_{\{k:\ w_k = \infty\}} (\sdelt{p_k})^2,
\label{eq:deltp_2norm_fixed}
\end{equation}
and by \eqref{eq:SdeqS} and \eqref{eq:deltp_2norm_fixed}, \eqref{eq:wslra_reduced} is equivalent to 
\begin{equation}
\minim_{\sdelt{p} \in \bbR^{\np}} \; \mbox{(\ref{eq:deltp_2norm_fixed})} \; \sto \;
\rrank \structdelt(\sdelt{p}) \le r.
\label{eq:wslra_reduced_fixed}
\end{equation}
It is straightforward  that $\structdelt(\sdelt{p})$ does not depend on  $\sdelt{p}_k$ corresponding to the fixed values. From the first-order optimality conditions of \eqref{eq:wslra_reduced_fixed}, all local minima of \eqref{eq:wslra_reduced_fixed} must satisfy $\sdelt{p_k} = 0$ for $w_k = \infty$. Hence,
the local minima of \eqref{eq:prob_slra} coincide with the local minima of \eqref{eq:wslra_reduced_fixed}.
\end{proof}

\begin{proof}[Proof of Proposition~\ref{prop:gradient}]
First, note that if the differential of $\costfun$ is represented as
\begin{equation*}
df(R, H) = \trace (A H^\top),
\end{equation*}
where $\trace$ is the trace operator, then $\mgrad{d}{m}(\costfun) = A$. From \eqref{eq:invgresd_resd}, the differential of $\costfun$ is given by
\begin{equation}
df(R, H) =  2 \invgresd^\top d\resds(R,H) - \invgresd^\top d\Gamma (R,H) \invgresd.
\label{cost_diff1}\tag{$df(R, H)$}
\end{equation}
By \eqref{eq:resd} and \eqref{eq:invgresdmat}, the first term in \eqref{cost_diff1} is equal to 
\[
2\invgresd^\top d\resds(R,H) = \trace \big(2\invgresdmat (H \struct(\sdat{p}))^\top\big) =  
\trace \big(2 \invgresdmat \struct^{\top}(\sdat{p}) H^\top\big).
\]
By \eqref{eq:gamma_block}, the second term in \eqref{cost_diff1} is equal to
\begin{align*}
\invgresd^\top d\Gamma (R,H) \invgresd  
&= \sum\limits_{i,j=1}^{n} \invgresdbl{i}^\top d \mblk{\Gamma}{i}{j} (R,H) \invgresdbl{j}\\
&= \sum\limits_{i,j=1}^{n}
\invgresdbl{i}^\top (H \mblk{\cmat}{i}{j} R^{\top} + R \mblk{\cmat}{i}{j} H^{\top}) \invgresdbl{j} \\
&= \sum\limits_{i,j=1}^{n}
 \trace \Big( \big( \invgresdbl{i} \invgresdbl{j}^{\top} R \mblk{\cmat}{i}{j}^\top + 
\invgresdbl{j} \invgresdbl{i}^{\top} R \mblk{\cmat}{i}{j} \big) H^\top \Big),
\end{align*}
which in combination with $\mblk{\cmat}{i}{j} = \mblk{\cmat}{j}{i}^{\top}$ yields \eqref{eq:mgrad_f}.
\end{proof}

\begin{proof}[Proof of Proposition~\ref{prop:jacobian}]
The equation \eqref{eq:jacobian} can be obtained by differentiation of 
 \eqref{eq:ker_slra_cos2t}.
The first summand in \eqref{eq:zij} can be expressed as 
\[
\frac{\partial \resds}{\partial R_{ij}} = 
\mvec \left(\ortvec{i} (\ortvec{j})^{\top} \struct(\sdat{p})\right) = (\struct^{\top}(\sdat{p}))_{:,j} \otimes \ortvec{i}. 
\]
The second summand is
\[
\frac{\partial \Gamma}{\partial R_{ij}} \invgresd = \vcol(a_1, \ldots, a_n),\quad a_l \in \bbR^{d},
\]
where
\[
\begin{split}
a_l &= \sum_{k=1}^{n} \frac{\partial \Gamma_{l,k}}{\partial R_{ij}} \invgresdbl{k} 
= \sum_{k=1}^{n} \left(\ortvec{i} (\ortvec{j})^{\top} \mblk{\cmat}{l}{k} R^{\top} +   R \mblk{\cmat}{l}{k} \ortvec{j} (\ortvec{i})^{\top}\right)  \invgresdbl{k}  \\
&= \left(\sum_{k=1}^{n} (\ortvec{j})^{\top} \mblk{\cmat}{l}{k} R^{\top} \invgresdbl{k} \right) \ortvec{i}  +  \sum_{k=1}^{n} \invgresdmat_{i,k} R \mblk{\cmat}{l}{k} \ortvec{j}. \quad\qedhere
\end{split}
\]
\end{proof}

\begin{proof}[Proof of Theorem~\ref{thm:cost_grad}]
The complexities for each individual step of Algorithm~\ref{alg:cost_fun_cholesky} are given in Table~\ref{tab:cost_fun_cholesky}. The complexities of the steps for Algorithm~\ref{alg:gradient} are given in Table~\ref{tab:gradient}.
\begin{table}[htb!]
\centering
\begin{tabular}{p{4cm} p{0.5cm} p{7.5cm}}
1. Compute \eqref{eq:resd} &---&  $O(dmn)$, since it is a multiplication of a $d \times m$ matrix $R$ by $m \times n$ matrix $\struct(p)$\\
2. Compute \eqref{eq:gamma_block} &---& $O(d^2 \bw m n)$. We need to compute $\bw n$ matrices $\mblk{\Gamma}{i}{j}$. By Theorem~\ref{thm:main}, each $\mblk{\cmat}{i}{j}$ has $\le m$ nonzero elements, and each $\mblk{\Gamma}{i}{j}$ needs  $d^2 m$ multiplications. \\
3. Compute  $\rchol{\Gamma}$ &---& $O(d^3 \bw^2 n)$, by Lemma~\ref{lem:comp_banded_cholesky}. \\
4. Solve $\rchol{\Gamma}^{\top}  \lschol(R) = \resd{R}$ &---& $O(d^2 \bw n)$, by Lemma~\ref{lem:comp_banded_cholesky}. \\
5. Compute $\|\lschol(R)\|_2^2$ &---& $O(nd)$.
\end{tabular}
\caption{Complexity of the steps of Algorithm~\ref{alg:cost_fun_cholesky}}\label{tab:cost_fun_cholesky}
\end{table}
\begin{table}[htb!]
\centering
\begin{tabular}{p{4cm} p{0.5cm} p{7.5cm}}
1. Steps 1--3 of Algorithm~\ref{alg:cost_fun} &---&  $O(d^3 \bw^2 n)$, see Table~\ref{tab:cost_fun_cholesky}.\\
2. Compute $2\invgresdmat \struct^\top(\sdat{p})$ &---& $O(nmd)$, since it is multiplication of a $d \times n$ matrix by $n \times m$ matrix.\\
3. Compute the second term of \eqref{eq:mgrad_f} &---& $O(d \bw m n)$. We need to compute $2\bw n$ products 
$\invgresdbl{j} \invgresdbl{i}^\top R \mblk{\cmat}{i}{j}$. By
Theorem~\ref{thm:main}, $\mblk{\cmat}{i}{j}$ has only one nonzero diagonal and $u :=\invgresdbl{i}^\top R \mblk{\cmat}{i}{j}$
can be computed in $dm$ flops. The  product $\invgresdbl{j} u$  takes $dm$ flops. 
\end{tabular}
\caption{Complexity of the steps of Algorithm~\ref{alg:gradient}}\label{tab:gradient}
\end{table}
\end{proof}

\begin{proof}[Proof of Theorem~\ref{thm:jac_pj}]
For each \eqref{eq:zij1}, due to block-bandedness of $\cmat$, we need to compute  $2\bw n$ products of the form 
$a_{j,k,l}^{\top} \invgresdbl{k}$, where
\[
a_{j,k,l}^{\top} \defeq (\ortvec{j})^{\top} \mblk{\cmat}{l}{k} R^{\top}. 
\]
By Theorem~\ref{thm:main}, each $(\ortvec{j})^{\top}  \mblk{\cmat}{l}{k}$ has at most one nonzero element, and computing $a_{j,k,l}^{\top}$ takes $d$ multiplications. Another $d$ multiplications are needed to compute the inner product of $a_{j,k,l}$ and~$\invgresdbl{k}$.
The computation of \eqref{eq:zij1} is repeated $m$ times, which leads to $O(d \bw m n)$ complexity.%number of multiplications.

For each \eqref{eq:zij2}, we need to compute $2\bw n$ products of the form $\invgresdmat_{i,k} R \mblk{\cmat}{1}{k} \ortvec{j}$, where each product has complexity $d$,  as in the previous step. The computation of \eqref{eq:zij2} is repeated $md$ times, which leads to $O(d^2 \bw mn)$ complexity.

For pseudo-Jacobian, we need  to solve $md$ times the banded system with the Cholesky factor $L^{\top}_R$ and $z_{ij}$, which has complexity $O(d^3 \bw m n)$. For Jacobian, we also need to multiply each $\Gamma^{-1} z_{ij}$ from the left by $G^{\top} (R)$, which has complexity $dmn$ by Note~\ref{not:corr_comp}. Therefore, this step has additional complexity $O(d^2 m^2 n)$.
\end{proof}

\begin{proof}[Proof of Proposition~\ref{prop:toeplitz_gradient}]
The second term in \eqref{eq:mgrad_f} can be represented as
\[
\begin{split}
&\sum\limits_{k=-\bw}^\bw \sum\limits_{i=\max(1-k,1)}^{\min(n-k,n)}
\invgresdbl{i+k} \invgresdbl{i}^\top R \cmat_{k} = \sum\limits_{k=0}^\bw \sum\limits_{i=1}^{n-k}
\invgresdbl{i+k} \invgresdbl{i}^\top R \cmat_{k} +
\sum\limits_{k=1}^\bw \sum\limits_{i=1}^{n-k}
\invgresdbl{i} \invgresdbl{i+k}^\top R \cmat_{k}^{\top}.
\end{split}
\]
It is easy to see that for $k \ge 0$
\[
\sum\limits_{i=1}^{n-k} \invgresdbl{i+k} \invgresdbl{i}^\top = 
\invgresdmat (\shiftmat{n}^\top)^{k} (\invgresdmat)^\top = N_k.
\]
\end{proof}

\begin{proof}[Proof of Theorem~\ref{thm:comp_blt}]
Since only $\bw$ matrices $\Gamma_k$ need to be computed, step 2  of Algorithm~\ref{alg:cost_fun} can be performed in $d^2 \bw m$ flops. For Cholesky factorization (Step 3) the algorithms exploiting the Toeplitz structure \cite{VanHuffel.etal04ICSM-High} can be used. Their  complexity  is $O(d^3 \bw n)$. 

For computation of the gradient, we use Proposition~\ref{prop:toeplitz_gradient}. Each $N_k$ can be computed in $d^2 n$ multiplications. The computation of $N_k R \cmat_k$ has complexity $O(d^2 m)$ due to sizes of the involved matrices. Therefore the total  complexity of Step 3 of Algorithm~\ref{alg:gradient} if $O(d^3 \bw n)$.
\end{proof}

\begin{proof}[Proof of Proposition~\ref{prop:mosaic_nonsing}]
By Lemma~\ref{lem:striped}, we consider only the case of mosaic Hankel matrix $\HH_{\bfm,n}$. Since $\wmat$ is a positive definite matrix, by \eqref{eq:lin_matr2} it is sufficient to prove that $(\eyemat{n} \otimes R) \maffine{\struct}$ is of full row rank, therefore we may assume that $\cmat = \eyemat{\np}$, without loss of generality. We can also limit our consideration to the case when all $m_k$ are equal, by noting that
\[
\rrank G_{\HH_{\bfm,n}} (R) =  \rrank G_{\HH_{\bfm',n}} (R'),
\]
where $\bfm' \defeq \bmx m_{max} & \cdots & m_{max} \emx$, $m_{max} \defeq \max m_l$, and
\[
R' \defeq \bmx 0_{d\times(m_{max} - m_1)} & R^{(1)} & \cdots & 0_{d\times(m_{max} - m_q)} & R^{(q)} \emx \in \bbR^{d \times q m_{max}},
\]
where $R^{(k)} \in \bbR^{d\times m_k}$ are defined in \eqref{eq:Rpart}.

Now we assume that $\bfm  = \bmx m_1 & \cdots & m_1 \emx$. Let us consider $R = Q \Pi^{(m_1,q)}$, where $\Pi^{(m_1,q)}$ is defined in \eqref{eq:perm}. Then by \eqref{G_h}, we have that
\[
G_{\HH_{\bfm,n}} (R) \Pi^{(m_1+n-1, q)} = 
\bmx
Q^{(1)}    &  \ldots    & Q^{(m_1)}  &          & \\
           & \ddots     &            &  \ddots  & \\
           &            & Q^{(1)}    & \ldots   & Q^{(m_1)} 
\emx \in \bbR^{nd \times \np}.
\]
where $Q = \bmx Q^{(1)}    & Q^{(2)} & \ldots  & Q^{(m_1)} \emx$ is the partition of $Q$ into $Q^{(k)} \in \bbR^{d\times q}$.

If $d=1$, then the matrix $G_{\HH_{\bfm,n}} (R) \Pi^{(m_1+n-1, q)}$ is of full row rank for all nonzero $Q$. For $d \ge 1$, consider $Q = \bmx X & \eyemat{d} \emx$. Since $d \le q$, $Q^{(m_1)}$ has the form $Q^{(m_1)} = \bmx *& \eyemat{d} \emx$ and $G_{\HH_{\bfm,n}} (R) \Pi^{(m_1+n-1, q)}$ is of full row rank due to its row echelon form. 
\end{proof}

\bibliographystyle{model1-num-names}
\bibliography{sla,lsnls,slra,grassopt,textbooks,kdu}

\end{document}